\newcommand{\R}{\mathbb R}
\newcommand{\N}{\mathbb N}
\newcommand{\Z}{\mathbb Z}
\newcommand{\C}{\mathbb C}
\newcommand{\dd}{\textup{d}}
\newcommand{\dbar}{{\mathchar'26\mkern-11mu \dd}}
\newcommand{\cl}{{\textup{cl}}}
\newcommand{\ls}{{\ell_*}}
\newcommand{\ee}{\textup{e}}
\newcommand{\ii}{\textup{i}}
\newcommand{\cC}{{\mathscr C}}
\newcommand{\cD}{{\mathscr D}}
\newcommand{\cS}{{\mathscr S}}
\newcommand{\bE}{{\mathbb E}}
\newcommand{\bF}{{\mathbb F}}
\newcommand{\bL}{{\mathbb L}}
\newcommand{\bT}{{\mathbb T}}
\newcommand{\bs}{{\bbsigma}}
\newcommand{\bt}{{\bbtau}}
\newcommand{\tr}{\textup{tr}}
\numberwithin{equation}{section}
\theoremstyle{plain}
\newtheorem{thm}{Theorem}[section]
\newtheorem{lem}[thm]{Lemma}
\newtheorem{prop}[thm]{Proposition}
\theoremstyle{definition}
\newtheorem{defn}[thm]{Definition}
\newtheorem{cor}[thm]{Corollary}
\theoremstyle{remark}
\newtheorem*{rem}{Remark}
\newtheorem*{ex}{Example}
\title[Hyperbolic problems with symbolic structure]{A scheme for solving
  hyperbolic \\ problems with symbolic structure}
\author[Z.-P.~Ruan]{Zhuoping Ruan}
\address{Department of Mathematics, Nanjing University, Nanjing
  210093, China}
\email{zhuopingruan@nju.edu.cn}
\author[I.~Witt]{Ingo Witt}
\address{Mathematical Institute, University of G\"{o}ttingen,
  Bunsenstr.~3-5, D-37073 G\"{o}ttingen, Germany}
\email{iwitt@uni-math.gwdg.de}
\keywords{Symbolic structure, well-posedness, principal symbol map,
  transport equations.}
\subjclass[2010]{Primary: 35L35; Secondary: 35L90}
\thanks{The first author was supported by the NSFC
  (No.~11771206). Part of the work was done while the second author
  was visiting Nanjing University.}
\begin{document}


\begin{abstract}
Hyperbolic problems can at times be solved employing symbolic
arguments. This is especially true for the construction of forward
(and backward) fundamental solutions. We formulate a corresponding
abstract scheme and illustrate its practicality by a number of
instructive examples.
\end{abstract}


\maketitle
\tableofcontents


\section{Introduction}\label{Sec1}

In this article, we present a scheme for solving hyperbolic problems
with symbolic structure. The objective is to promote the idea that
using symbolic arguments is also suitable for hyperbolic problems. In
fact, symbolic arguments are widely used for elliptic equations, but,
less known, can likewise be effectively applied in the hyperbolic
realm. The main difference between both is that one obtains a
parametrix (i.e., an inverse up to a remainder in the residual class)
in the elliptic case, whereas one actually accomplishes to get a
genuine inverse for hyperbolic problems.

In the framework we propose the conditions to be imposed are the
existence of a principal symbol map, the unique solvability of the
transport equations, asymptotic completeness, and well-posedness on
the level of the residual class. See below for a precise formulation
and a discussion of these conditions.

\medskip

Specifically, we are interested in the following situation: \ Let
$\bL\colon \bE^0\longrightarrow \widetilde{\mathbb E}^0$ be a linear
continuous operator between Fr\'echet spaces. Suppose that, for a
given $f\in \widetilde{\mathbb E}^0$, we want to solve the equation
\begin{equation}\label{equ}
    \mathbb L u = f
\end{equation}
for $u\in \bE^0$. Suppose, in addition, that Eq.~\eqref{equ} comes
with a \textit{symbolic structure\/}.  By this we mean the following:
\ There are four sequences $\left\{\bE^j\right\}_{j\in\N_0}$,
$\left\{\bF^j\right\}_{j\in\N_0}$,
$\bigl\{\widetilde{\bE}^j\bigr\}_{j\in\N_0}$,
$\bigl\{\widetilde\bF^j\bigr\}_{j\in\N_0}$ of Fr\'echet spaces, where
\[
  \bE^0\supseteq \bE^1\supseteq \bE^2\supseteq \dotsc \supseteq
  \bE^\infty, \qquad \widetilde \bE^0\supseteq
  \widetilde\bE^1\supseteq \widetilde\bE^2\supseteq \dotsc \supseteq
  \widetilde \bE^\infty,
\]
$\bE^\infty= \bigcap_j\nolimits \bE^j$, $\widetilde\bE^\infty= \bigcap_j
\widetilde\bE^j$, and
\[
  \bL \in \bigcap\nolimits_{j\in\N_0} \mathcal
  L\bigl(\bE^j,\widetilde\bE^j\bigr).
\]
Moreover, there are linear continuous operators $\bs^j \colon \bE^j
\to \bF^j$, $\widetilde\bs^j \colon \widetilde\bE^j \to
\widetilde\bF^j$, and $\bT^j\colon \bF^j\to \widetilde\bF^j$ such
that, for each $j\in \N_0$, the diagram
\begin{equation}\label{diagr}
\begin{CD}
  0 @>>> \bE^{j+1} @>>> \bE^j @>\bs^j>> \bF^j @>>> 0 \\
  @. @V{\bL}VV @V{\bL}VV @VV{\bT^j}V @. \\
  0 @>>> \widetilde\bE^{j+1} @>>> \widetilde\bE^j @>\widetilde\bs^j>>
  \widetilde\bF^j @>>> 0.
 \end{CD}    
\end{equation}
commutes.

\bigskip
We make the following assumptions:

\medskip
\noindent
\textbf{(I) Principal symbol maps.} \ The rows in \eqref{diagr} are
exact.

\medskip
\noindent
\textbf{(II) Transport equations.} \ The operators $\mathbb \bT^j
\colon \mathbb \bF^j \to \widetilde{\mathbb F}^j$ are bijective.

\medskip
\noindent
\textbf{(III) Asymptotic completeness.} \ Given a sequence $\{u_j\}$
with $u_j\in \bE^j$ for all $j\in\N_0$, there is a $u\in \bE^0$ such
that, for all $J\in \N_0$,
\begin{equation}\label{nun}
  u - \sum\nolimits_{j<J} u_j \in \bE^J.
\end{equation}

\medskip
\noindent
\textbf{(IV) Well-posedness.} \ Given $f\in \widetilde\bE^\infty$,
Eq.~\eqref{equ} possesses a unique solution $u\in \bE^\infty$.

\bigskip

The main theorem is the following one:

\begin{thm}\label{main}
Suppose that properties\/ \textup{(I) through (IV)} hold. Then\/
\textup{Eq.~\eqref{equ}} possesses a unique solution $u\in \bE^0$ for
any $f\in \widetilde\bE^0$.
\end{thm}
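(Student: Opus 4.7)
\bigskip

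\noindent\textbf{Proof plan.}

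The plan is a formal/Borel-style iteration driven by the symbolic structure, followed by a correction at the residual level. The principal symbol map picks out the leading term, the transport equations (bijectivity of $\bT^j$) allow us to invert it, exactness lets us lift to $\bE^j$, and then the remainder drops one order in the filtration. Summation via asymptotic completeness gives a true approximate solution modulo $\widetilde\bE^\infty$, and well-posedness on the residual class absorbs what is left.

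\medskip

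\emph{Existence.} \ First I would construct inductively a sequence $u_j\in\bE^j$ together with a sequence of remainders $f_j\in\widetilde\bE^j$, starting from $f_0=f$. At step $j$, given $f_j\in\widetilde\bE^j$, I set $\phi_j=(\bT^j)^{-1}\widetilde\bs^j f_j\in\bF^j$ (well defined by (II)), lift by exactness of the top row in \eqref{diagr} to some $u_j\in\bE^j$ with $\bs^j u_j=\phi_j$ (using (I)), and set $f_{j+1}=f_j-\bL u_j$. Commutativity of \eqref{diagr} gives $\widetilde\bs^j f_{j+1}=\widetilde\bs^j f_j-\bT^j\bs^j u_j=0$, and exactness of the bottom row then places $f_{j+1}\in\widetilde\bE^{j+1}$, closing the induction. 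By asymptotic completeness (III), there is a $u\in\bE^0$ with $u-\sum_{j<J}u_j\in\bE^J$ for every $J$. Writing
\begin{equation*}
  \bL u-f = \bL\Bigl(u-\sum\nolimits_{j<J}u_j\Bigr)-f_J,
\end{equation*}
and noting that $\bL\in\mathcal L(\bE^J,\widetilde\bE^J)$, we see both summands lie in $\widetilde\bE^J$ for every $J$, so $\bL u-f\in\widetilde\bE^\infty$. Well-posedness (IV) produces a unique $v\in\bE^\infty$ with $\bL v=f-\bL u$, and $u+v\in\bE^0$ solves \eqref{equ}.

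\medskip

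\emph{Uniqueness.} \ If $\bL u=0$ with $u\in\bE^0$, I would iterate downward through the filtration: from $\bT^0\bs^0 u=\widetilde\bs^0\bL u=0$ and injectivity of $\bT^0$ (II), we get $\bs^0 u=0$, hence $u\in\bE^1$ by exactness (I). The same argument at level $j$ shows that $u\in\bE^j$ implies $u\in\bE^{j+1}$, so $u\in\bE^\infty$. Uniqueness in (IV) then forces $u=0$.

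\medskip

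\emph{Main obstacle.} \ None of the individual steps is computationally heavy; the only genuine point of care is the bookkeeping when combining (I)--(III). One has to confirm that the inductively built $f_j$ actually stays in $\widetilde\bE^j$ (as opposed to merely mapping to zero under $\widetilde\bs^{j-1}$), which is exactly what exactness of the bottom row of \eqref{diagr} delivers, and that the partial-sum identity above correctly identifies the residual so that (IV) applies. Everything else is a direct diagram chase.
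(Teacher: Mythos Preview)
Your proof is correct and is essentially the same as the paper's: the paper runs the identical induction (with $g_{j+1}=\bL(u_0+\dots+u_j)-f=-f_{j+1}$ in your notation), invokes asymptotic completeness to get an approximate solution with residual in $\widetilde\bE^\infty$, and then corrects via (IV); the uniqueness argument is verbatim the same diagram chase. There is no meaningful difference in approach.
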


In applications, it can be fairly hard to provide a framework in which
all the conditions (I) through (IV) are met. This is because to
establish such a framework means nothing less than to reveal the
analytic content of the problem under consideration. We shall discuss
five illustrative examples in Section~\ref{sec3}. An even more
sophisticated example will appear in \cite{RW2021}.

\bigskip

Let us add a few remarks before we proceed: 

\medskip
\noindent
(a) \ The open mapping theorem implies that the operator $\bL\colon
\bE^0 \to \widetilde\bE^0$ has a continuous inverse $\bL^{-1}\colon
\widetilde\bE^0 \to \bE^0$ if conditions (I) through (IV) hold. The same is
true for $\bL\colon \bE^j \to \widetilde\bE^j$ and any
$j\in\N_0\cup\{\infty\}$.

\medskip
\noindent
(b) \ Well-posedness in the residual class, as expressed by (IV), is
typical of hyperbolic problems. For elliptic problems, one usually has
conditions (I) through (III) only. Under these conditions, given $f\in
\widetilde \bE^0$, one finds a $u\in \bE^0$ such $\bL u - f\in
\widetilde\bE^\infty$. This $u$ is unique modulo $\bE^\infty$.

\medskip
\noindent
(c) \ Condition (II) is in effect an \textit{ellipticity
  condition\/}. Indeed, in the elliptic case, the operator $\bT^j$ is
often just multiplication by the principal symbol of the operator
$\bL$. In the hyperbolic case, however, by reasons coming from
microlocal analysis the principal symbol of $\bL$ usually vanishes so
that the next-order symbol takes control. (This also relates to the
fact that one does not gain as much regularity by solving
Eq.~\eqref{equ} as in the elliptic case.) In physics, the $\bT^j$ are
usually dubbed transport operators.

\medskip
\noindent
(d) \ Property \eqref{nun} is commonly written as $u \sim \sum_j u_j$
in $\bE^0$ understanding that, in fact, $\bE^0$ stands for the
filtration $\{\bE^j\}$ of $\bE^0$. Notice that this property
determines $u$ uniquely modulo $\bE^\infty$.

\bigskip

When symbolic arguments are exploited in the context of hyperbolic
problems, then this is usually done when working in local
coordinates. (The problem treated in \cite[Sec.~5]{Dui1996}
constitutes a notable exception. It will be reviewed in
Section~\ref{Sec32}.) In such a situation one has symbolic components
to all orders and, accordingly, transport equations are also solved to
all orders sort of instantaneously. One of the fine points about the
scheme disclosed here is that one does not have to care about
lower-order symbols, the determination of them is implicit and runs in
the background. Of course, details about lower-order symbols might be
worked out if necessary for the respective application.

\bigskip

The paper is organized as follows: \ In Section~2, we prove
Theorem~\ref{main} and discuss a few implications of the
assumptions. Section~3 is then devoted to examples which, as we hope,
demonstrate the usefulness of this method. In an appendix, we describe
an abstract scheme for establishing asymptotic completeness and, for
the reader's convenience, also recall the notion of a conormal
distribution and the transmission property.

\medskip

\noindent
\textbf{Notation.}  We use standard notation from microlocal analysis,
see e.g.~\cite{Hoe1985a,Hoe1985b,Tay1981}.

\begin{itemize}
\item \textbf{(Pseudodifferential operators)}
  \ $\Psi_{\textup{cl}}^m(X)$ for $m\in\C$ is the space of classical
  pseudodifferential operators of order $m$ on a $\cC^\infty$~manifold
  $X$ and $\sigma_\psi^m(P)\in S^{(m)}(\dot T^*X)$ is the principal
  symbol of an operator $P\in \Psi_{\textup{cl}}^m(X)$.

\item \textbf{(Fourier integral operators)} \ For $\mu\in\C$,
  $\cC^\infty$ manifolds $X,\,Y$, and a homogeneous canonical relation
  $C$ from $\dot T^*Y$ to $T^*X$, $I_\cl^\mu(X,Y;C)$ denotes the
  corresponding class of classical Fourier integral operators $A\colon
  \cC_{\textup{c}}^\infty(Y)\to \cC^\infty(X)$. (Especially,
  $\Psi_\cl^m(X) = I_\cl^m(X,X;\Delta_{\dot T^*X})$, where
  $\Delta_{\dot T^*X}\subset \dot T^*X\times\dot T^*X$ is the
  diagonal.) We have a principal symbol $\sigma_\psi^\mu(A) \in
  S^{(\bar \mu)}(C;L)$ of a certain homogeneity $\bar \mu$, where $L$
  is some line bundle over~$C$.  (Both $\bar \mu$ and $L$ are of no
  further concern to us.) For $P\in \Psi_{\textup{cl}}^m(X)$ properly
  supported, $A\in I_\cl^\mu(X,Y;C)$, and $C\subset \dot T^*X
  \times\dot T^*Y$, we have $PA\in I_\cl^{m+\mu}(X,Y;C)$ and
  $\sigma_\psi^{m+\mu} (PA) = \sigma_\psi^m(P)\bigr|_C \,
  \sigma_\psi^\mu(A)$, where $\sigma_\psi^m(P)$ is lifted to a
  function on $\dot T^*X \times \dot T^*Y$ via the projection onto the
  left factor.

\item \textbf{(Transport operators)} \ In the previous item, suppose
  that $\sigma_\psi^m(P)$ vanishes on the projection of $C\subset \dot
  T^*X\times \dot T^*Y$ into $\dot T^*X$. Then $PA\in
  I_\cl^{m+\mu-1}(X,Y;C)$ and $\sigma_\psi^{m+\mu-1} (PA) = T_C^P
  \sigma_\psi^\mu(A)$, where the so-called transport operator
  $T_C^P\in \operatorname{Diff}^1(C;L)$ is a first-order differential
  operator acting on sections of the bundle $L\to C$. $T_C^P$ differs
  from the Lie derivative $\left(1/\ii\right)\mathcal
  L_{\sigma_\psi^m(P)}$ by an operator of order $0$ and is homogeneous
  of degree $m-1$, i.e., $T_C^P$ sends $S^{(\nu)}(C;L)$ into
  $S^{(\nu+m-1)}(C;L)$ for each $\nu\in\C$.
\end{itemize}


\section{Verification of results}


\subsection{Proof of Theorem~\ref{main}}

We need both existence and uniqueness.


\subsubsection*{Uniqueness}

Let $u\in \bE^0$ be a solution to $\bL u=0$. Then, by induction,
$u\in\bE^j$ for all~$j$. Indeed, suppose that $u\in \bE^j$ for some
$j\in\N_0$. Then $\bT^j \bs^j(u) = \widetilde\bs^j (\bL u)=0$. Hence,
$\bs^j(u)=0$ and $u\in \bE^{j+1}$.

Consequently, $u\in \bigcap_j \bE^j= \bE^\infty$. As $\bL\colon
\bE^\infty \to \widetilde\bE^\infty$ is bijective, we find that $u=0$
as required.


\subsubsection*{Existence}

We proceed in three steps.

\paragraph{Step 1}
Inductively, we construct $u_j\in \bE_j$ such that
\[
  g_{j+1}= \bL(u_0+u_1+\dots+u_j)-f\in \widetilde\bE^{j+1}, \quad
  j\in\N_0.
\]
Indeed, we first pick $u_0\in \bE^0$ with $\bT^0 \bs^0(u_0) =
\widetilde\bs^{\,0}(f)$. Then $g_1 =\bL u_0 - f \in \widetilde\bE^0$
and $\tilde\bs^0(g_1) = \bT^0 \bs^{\,0}(u_0) -
\widetilde\bs^{\,0}(f)=0$, hence $g_1\in \widetilde\bE^1$.

Now suppose that $u_0,\dots,u_j$ for some $j\in\N_0$ have already been
constructed. We pick $u_{j+1}\in \bE^{j+1}$ with $\bT^{j+1}\bs^{j+1}
(u_{j+1}) = -\,\widetilde\bs^{\,j+1}(g_{j+1})$. Then
\[
  g_{j+2} = \bL(u_0+\dots+u_j + u_{j+1}) - f = g_{j+1} + \bL u_{j+1}
  \in \widetilde\bE^{j+1}
\]
and
\[
  \widetilde\bs^{\,j+1}(g_{j+2}) = \widetilde\bs^{\,j+1}(g_{j+1}) +
  \bT^{j+1} \bs^{j+1}(u_{j+1})=0,
\]
hence $g_{j+2}\in \widetilde\bE^{j+2}$.


\paragraph{Step 2}
Asymptotic completeness yields a $u'\in \bE^0$ such that $u' -
\sum_{j<J}u_j\in \bE^J$ for all $J\in \N_0$. Then
\[
  \bL u' - f = \bL\left(u' - \sum\nolimits_{j<J}u_j\right) + g_J \in
  \widetilde\bE^J, \quad J\in\N_0,
\]
hence $g' =\bL u'-f \in \widetilde\bE^\infty$.


\paragraph{Step 3}
Eventually, let $v\in \bE^\infty$ be the unique solution to $\bL v =
-\,g'$. Then $u=u'+v$ is the desired solution to Eq.~\eqref{equ}.

\smallskip

This finishes the proof. \qed


\subsection{Implications of the assumptions}

For the sake of completeness, we mention two consequences of the
assumptions.

\begin{lem}
Suppose\/ \textup{(I) through (IV)} hold. Then $\widetilde\bE^0$ is
asymptotically complete for the filtration $\{\widetilde\bE^j\}$.
\end{lem}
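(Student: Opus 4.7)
The plan is to transport the asymptotic completeness from $\bE^0$ over to $\widetilde\bE^0$ using the bijectivity of $\bL$ at every level of the filtration. Concretely, suppose a sequence $\{\widetilde u_j\}$ with $\widetilde u_j\in \widetilde\bE^j$ is given. By remark (a) following Theorem~\ref{main}, the operator $\bL\colon \bE^j\to\widetilde\bE^j$ is a continuous bijection for each $j\in\N_0$, so we may pick $u_j\in\bE^j$ with $\bL u_j=\widetilde u_j$.

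Next I apply asymptotic completeness (III) on the source side to the sequence $\{u_j\}$, obtaining $u\in\bE^0$ with
\[
  u-\sum\nolimits_{j<J} u_j \in \bE^J \quad\text{for all }J\in\N_0.
\]
Setting $\widetilde u:=\bL u\in\widetilde\bE^0$ and using linearity together with $\bL u_j=\widetilde u_j$, we get
\[
  \widetilde u-\sum\nolimits_{j<J}\widetilde u_j = \bL\Bigl(u-\sum\nolimits_{j<J} u_j\Bigr) \in \bL(\bE^J)\subseteq \widetilde\bE^J
\]
for every $J$, where the last inclusion comes from the standing hypothesis $\bL\in\bigcap_j \mathcal L(\bE^j,\widetilde\bE^j)$. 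This exhibits $\widetilde u$ as the desired asymptotic sum.

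The only point that deserves a moment's care, and the closest thing to an obstacle, is the appeal to the bijectivity of $\bL$ on every level $\bE^j\to\widetilde\bE^j$. This is precisely the content of remark (a): the four hypotheses (I)--(IV) are inherited by the shifted filtrations $\{\bE^{j+k}\}_{k}$ and $\{\widetilde\bE^{j+k}\}_{k}$ (for (III) one simply prepends zeros to a given sequence in the $\bE^{j+k}$ to view it as a sequence in the $\bE^{k}$), whence Theorem~\ref{main} applied at each level yields a bijection, and the open mapping theorem supplies continuity of its inverse. Once this is invoked, the argument above is essentially a one-line diagram chase.
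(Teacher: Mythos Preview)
Your proof is correct and follows essentially the same route as the paper's: pull back each $\widetilde u_j$ to $u_j\in\bE^j$ via the level-$j$ bijectivity of $\bL$, asymptotically sum on the $\bE$-side by (III), and push forward by $\bL$. Your extra paragraph justifying the level-wise bijectivity is a welcome clarification of what the paper leaves implicit in its appeal to Theorem~\ref{main}.
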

\begin{proof}
Let $\{f_j\}\subset \widetilde \bE^0$ be a sequence with $f_j \in
\widetilde \bE^j$ for all $j$. From Theorem~\ref{main} we infer that
there are (uniquely determined) $u_j\in \bE^j$ such that $\bL u_j =
f_j$. Let $u\in \bE^0$ satisfy \eqref{nun} and $f= Lu \in
\widetilde\bE^0$. Then, for all $J\in\N_0$, $ u -
\sum\nolimits_{j<J}u_j\in \bE^J$ and
\[
  f -\sum\nolimits_{j<J} f_j = \bL \left( u - \sum\nolimits_{j<J}u_j\right)
  \in \widetilde\bE^J.
\]
Hence, $f\sim \sum f_j$ in $\widetilde\bE^0$. 
\end{proof}

Often one works in local coordinates, where one has symbolic
components to all orders. On an abstract level, this corresponds to
(the existence of and) fixing a splitting of the first row of \eqref{diagr}. So,
let us additionally assume that there are linear continous maps
$\bt^j\colon \bF^j \to \bE^j$ such that $\bs^j \circ \bt^j =
\textup{id}_{\,\bF^j}$. Then one can write
\[
  \bE^0 \cong \bE^j \oplus \bF^0 \oplus \dotsc \oplus \bF^{j-1}
\]
(algebraically and topologically),
where one passes to the next level using $\bE^j \cong \bE^{j+1} \oplus
\bF^j$ through the identification $u\overset{\cong}{\longmapsto}
\left(u-\bt^j \bs^j u,\bs^j u\right)$.

\begin{lem}
If $\bt^j$ is a splitting of the first row of \eqref{diagr}, then so
is $\bL \circ \bt^j \circ (\bT^j)^{-1}$ for the second row.
\end{lem}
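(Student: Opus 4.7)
The plan is to verify the defining property of a splitting by direct diagram chase, using only the commutativity of \eqref{diagr} together with assumption (II). There are two things to check: first, that $\bL \circ \bt^j \circ (\bT^j)^{-1}$ is a well-defined continuous map $\widetilde\bF^j \to \widetilde\bE^j$; second, that composition on the left with $\widetilde\bs^j$ gives the identity on $\widetilde\bF^j$.

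For the first point I would invoke the open mapping theorem: by (II) the map $\bT^j\colon \bF^j \to \widetilde\bF^j$ is a continuous bijection between Fr\'echet spaces, so $(\bT^j)^{-1}$ is continuous. Composing with the continuous maps $\bt^j\colon \bF^j \to \bE^j$ (given) and $\bL\colon \bE^j \to \widetilde\bE^j$ (from the hypothesis that $\bL\in\mathcal L(\bE^j,\widetilde\bE^j)$) then yields a continuous map with the claimed source and target.

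For the second point, the commutativity of the right-hand square of \eqref{diagr} supplies the identity $\widetilde\bs^j \circ \bL = \bT^j \circ \bs^j$ on $\bE^j$. Combining this with the splitting hypothesis $\bs^j \circ \bt^j = \textup{id}_{\bF^j}$ and the bijectivity of $\bT^j$, the direct computation
\[
  \widetilde\bs^j \circ \bL \circ \bt^j \circ (\bT^j)^{-1}
  = \bT^j \circ \bs^j \circ \bt^j \circ (\bT^j)^{-1}
  = \bT^j \circ (\bT^j)^{-1}
  = \textup{id}_{\widetilde\bF^j}
\]
closes the argument.

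There is essentially no obstacle here: the entire content of the lemma is encoded in the commutativity of \eqref{diagr} and in (II), and the non-formal ingredient is merely the open mapping theorem used to guarantee continuity of $(\bT^j)^{-1}$. The only conceptual observation worth flagging is that the construction is natural: once one insists that the splitting of the second row arise from one of the first row via the transport correspondence $\bT^j$, it is forced to be $\bL \circ \bt^j \circ (\bT^j)^{-1}$.
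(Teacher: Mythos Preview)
Your proof is correct and follows exactly the same route as the paper: the key computation $\widetilde\bs^{\,j} \bL \bt^j(\bT^j)^{-1} = \bT^j \bs^j \bt^j(\bT^j)^{-1} = \textup{id}_{\,\widetilde\bF^j}$ is all the paper records. Your additional remark on continuity of $(\bT^j)^{-1}$ via the open mapping theorem is a welcome clarification that the paper leaves implicit.
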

\begin{proof}
We have $\widetilde\bs^{\,j} \bL \bt^j(\bT^j)^{-1} = \bT^j \bs^j
\bt^j(\bT^j)^{-1} = \textup{id}_{\,\widetilde \bF^j}$.
\end{proof}


\section{Various examples}\label{sec3}

We discuss a few examples to demonstrate the practicality of the scheme.


\subsection{Parametrices for elliptic equations}

To illustrate the elliptic case, we recall the standard result about
the existence of a parametrix to an elliptic operator. See
e.g.~\cite{Hoe1985a,Tay1981}. Notice that in the elliptic case
conditions (I) through (III) hold, but condition (IV) is usually violated.

\begin{thm}
Let $X$ be $\cC^\infty$ closed manifold and $P\in \Psi_\cl^m(X)$ be an
elliptic classical pseudodifferential operator of order $m\in\C$. Then
there is a pseudodifferential operator $Q\in \Psi_\cl^{-m}(X)$ such
that $PQ-I,\, QP-I\in \Psi^{-\infty}(X)$. $Q$ is unique modulo
$\Psi^{-\infty}(X)$.
\end{thm}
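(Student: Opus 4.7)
The plan is to realize the parametrix construction as a direct application of the existence part of Theorem~\ref{main}, stopping before Step~3 (which is where condition (IV) would be needed). Concretely, I would set
\[
  \bE^j = \Psi_\cl^{-m-j}(X), \qquad \widetilde\bE^j = \Psi_\cl^{-j}(X),
\]
\[
  \bF^j = S^{(-m-j)}(\dot T^*X), \qquad \widetilde\bF^j = S^{(-j)}(\dot T^*X),
\]
with $\bs^j$ and $\widetilde\bs^j$ taken to be the respective principal symbol maps. For the right-parametrix problem, let $\bL$ be left multiplication by $P$; then the transport operator $\bT^j$ is multiplication by $\sigma_\psi^m(P) \in S^{(m)}(\dot T^*X)$. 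Under these identifications $\bE^\infty = \widetilde\bE^\infty = \Psi^{-\infty}(X)$, and the sought-after right parametrix is precisely a solution of $\bL Q = I$ modulo $\widetilde\bE^\infty$.

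Next, I would verify conditions (I)--(III). Exactness of each row of \eqref{diagr} is the classical short exact sequence $0 \to \Psi_\cl^{k-1}(X) \to \Psi_\cl^k(X) \xrightarrow{\sigma_\psi^k} S^{(k)}(\dot T^*X) \to 0$ at each order. Bijectivity of $\bT^j$ is exactly the hypothesis of ellipticity: $\sigma_\psi^m(P)$ is invertible on $\dot T^*X$, so pointwise division by it provides the inverse. Asymptotic completeness (III) is the standard asymptotic summation of symbols, proved via a Borel-type construction with a suitable cutoff sequence. Condition (IV) is not available — and indeed $P$ generally fails to be bijective on $\Psi^{-\infty}(X)$ — which is precisely the reason one gets only a parametrix in the elliptic setting, consistent with remark~(b).

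With (I)--(III) in place, Steps~1 and~2 of the proof of Theorem~\ref{main} applied to $f = I$ produce $Q \in \Psi_\cl^{-m}(X)$ with $PQ - I \in \Psi^{-\infty}(X)$: a right parametrix. Running the identical scheme with $\bL$ taken to be right multiplication by $P$ yields a left parametrix $Q' \in \Psi_\cl^{-m}(X)$ with $Q'P - I \in \Psi^{-\infty}(X)$. The standard identity
\[
  Q' - Q = (Q'P - I)Q - Q'(PQ - I) \in \Psi^{-\infty}(X)
\]
shows that $Q$ is automatically a two-sided parametrix, and the same calculation applied to any other two-sided parametrix yields uniqueness modulo $\Psi^{-\infty}(X)$.

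The main obstacle — insofar as there is one — is verifying asymptotic completeness (III); everything else is tautologically built into the symbol calculus and the definition of ellipticity. Once these pieces are assembled, the proof reduces to an immediate specialization of the abstract scheme.
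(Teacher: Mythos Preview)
Your proposal is correct and follows essentially the same route as the paper: the identical choices of $\bE^j$, $\widetilde\bE^j$, $\bF^j$, $\widetilde\bF^j$, $\bs^j$, $\widetilde\bs^{\,j}$, $\bL$, and $\bT^j$ are made, conditions (I)--(III) are invoked to obtain a right parametrix, and the paper likewise defers the left parametrix and the two-sided/uniqueness conclusion to ``the usual way.'' Your write-up is in fact more explicit than the paper's, spelling out the standard identity $Q'-Q=(Q'P-I)Q-Q'(PQ-I)$ that the paper leaves implicit.
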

\begin{proof}
To illustrate the applicability of the abstract scheme, we show how to
construct a right parametrix. A left parametrix is constructed in the
same manner, the proof is then concluded in the usual way.

We identify all the players in the scheme: \ $\bL$ is multiplication
by $P$ from the left (and then $u=Q$, $f=I$), $\bE^j =
\Psi_\cl^{-m-j}(X)$, $\widetilde\bE^j = \Psi_\cl^{-j}(X)$, $\bF^j =
S^{(-m-j)}(\dot T^*X)$, $\widetilde\bF^j = S^{(-j)}(\dot T^*X)$,
$\bs^j=\sigma_\psi^{-m-j}$, $\widetilde\bs^{\,j}=\sigma_\psi^{-j}$,
and $\bT^j$ is multiplication by $\sigma_\psi^m(P)$.

It is well-known that (I) through (III) hold in this situation. Hence,
there is a $Q\in \Psi_\cl^{-m}(X)$ such that $PQ-I\in
\Psi^{-\infty}(X)$.
\end{proof}

\begin{rem}
The Neumann series argument usually invoked in the parametrix
construction is not required here.
\end{rem}


\subsection{A class of ordinary differential operators}

We construct a specific fundamental system for a class of ordinary
differential operators $L$ on the half-line
$\overline\R_+=[0,\infty)$. The result presented here, when it
  applies, is more precise than known results, see
  e.g.~\cite{Fed1993,Was1985,Yag1997}. Beyond that, the proof is
  considerably shorter.

Let $\ls>-1$. The ordinary differential operators $L$ under
consideration are of the form
\[
  L = D_t^m + \sum_{r=1}^m a_r(t) D_t^{m-r},
\]
where $a_r\in S_{\cl,\ls+1}^{r\ls}(\overline\R_+)$ for $1\leq r\leq
m$. Here, $b\in S_{\cl,\ls+1}^\mu(\overline\R_+)$ for $\mu\in\C$ means
that $b\in\cC^\infty(\overline\R_+)$ and there are coefficients
$b_j\in \C$ such that, for all $J,\,k\in\N_0$,
\[
  \left| \partial_t^k\left( b(t) - \chi(t) \sum\nolimits_{j<J} b_j\,
  t^{\mu-j(\ls+1)}\right) \right| \lesssim \langle t \rangle^{\Re \mu
    -J(\ls+1) - k}.
\]
Here, $\chi\in \cC^\infty(\overline\R_+)$ is an excision function,
i.e., $\chi(t)=0$ for $t\leq1$ and $\chi(t)=1$ for $t\geq2$. Writing
$a_r(t)= a_{r0}t^{r\ls} +a_{r1}t^{r\ls-(\ls+1)} +
O(t^{r\ls-2(\ls+1)})$ as $t\to\infty$, we introduce
\begin{equation}\label{odop2}
  l_0(\tau) = \tau^m + \sum_{r=1}^m a_{r0}\tau^{m-r}, \quad
  l_1(\tau) = \sum_{r=1}^m a_{r1}\tau^{m-r}.
\end{equation}

\begin{prop}
Let $\mu\in\R$ be a simple root of the polynomial $l_0$ and
\[
   \Delta = \Delta(\mu) = -\,\frac{ \ls \mu\,l_0''(\mu)/2 +
    \ii\,l_1(\mu)}{l_0'(\mu)}\in \C. 
\]
Then the equation $Lu=0$ has a unique solution of the form
\[
  u(t)=\ee^{\ii \mu\,\frac{t^{\ls+1}}{\ls+1}} c(t),
\]
where $c\in S_{\cl,\ls+1}^\Delta(\overline\R_+)$ and $c(t) = t^\Delta
+ O(t^{\Delta-(\ls+1)})$ as $t\to\infty$.
\end{prop}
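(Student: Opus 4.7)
My plan is to make the ansatz $u = \ee^{\ii\phi}c$ with $\phi(t) = \mu\, t^{\ls+1}/(\ls+1)$, which (using $\ee^{-\ii\phi} D_t\, \ee^{\ii\phi} = D_t + \phi'(t) = D_t + \mu t^{\ls}$) transforms $Lu = 0$ into $Pc = 0$ with
\[
P := \ee^{-\ii\phi}\, L\, \ee^{\ii\phi} = \sum_{r=0}^m a_r(t)\,(D_t + \mu t^{\ls})^{m-r}.
\]
The first step is a direct computation of $P$ applied to a monomial, to two leading orders. A short induction on $k$ gives
\[
(D_t + \mu t^{\ls})^k t^\alpha = \mu^k t^{\alpha + k\ls} - \ii\mu^{k-1}\!\left(k\alpha + \tbinom{k}{2}\ls\right) t^{\alpha + k\ls - (\ls+1)} + O\!\left(t^{\alpha + k\ls - 2(\ls+1)}\right),
\]
and combining with the expansions of the $a_r(t)$ yields
\[
P t^\alpha = l_0(\mu)\, t^{\alpha + m\ls} + \Bigl(l_1(\mu) - \ii\alpha\, l_0'(\mu) - \tfrac{\ii \ls \mu}{2}\, l_0''(\mu)\Bigr) t^{\alpha + m\ls - (\ls+1)} + O\!\left(t^{\alpha + m\ls - 2(\ls+1)}\right).
\]
The first coefficient vanishes because $l_0(\mu) = 0$, and the second equals $-\ii(\alpha - \Delta)\, l_0'(\mu)$ by the definition of $\Delta$; thus both cancel at $\alpha = \Delta$.

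Next I would recast the problem in the form of Theorem~\ref{main}. Writing $c = \chi(t)\, t^\Delta + c'$ for an excision function $\chi$, the equation $Pc = 0$ becomes $P c' = -P(\chi t^\Delta)$, and the above computation shows that the right-hand side lies in $S^{m\ls + \Delta - 2(\ls+1)}_{\cl,\ls+1}(\overline \R_+)$. I set $\bL = P$ and take
\[
\bE^j = S^{\Delta - (j+1)(\ls+1)}_{\cl,\ls+1}(\overline \R_+), \qquad \widetilde \bE^j = S^{m\ls + \Delta - (j+2)(\ls+1)}_{\cl,\ls+1}(\overline \R_+),
\]
$\bF^j = \widetilde \bF^j = \C$, with $\bs^j$ and $\widetilde\bs^{\,j}$ the leading-coefficient extraction maps. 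The transport operator $\bT^j$ is then multiplication by the scalar $\ii(j+1)(\ls+1)\, l_0'(\mu)$, which is nonzero since $\mu$ is a simple root (so $l_0'(\mu) \neq 0$) and $\ls + 1 > 0$. Conditions~(I) and~(II) are then immediate, and~(III) is Borel summation for the classes $S^\mu_{\cl,\ls+1}$, which is contained in the appendix.

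The main obstacle is condition~(IV): for every rapidly decreasing $g \in \widetilde \bE^\infty$, the equation $Pc = g$ must have a unique rapidly decreasing solution $c \in \bE^\infty$. This is a nontrivial ODE statement: existence of a rapidly decreasing particular solution comes from a variation-of-parameters construction based on the fundamental system of $Lu = 0$ and its exponential-type asymptotics at infinity, while uniqueness amounts to showing that $P$ has trivial kernel on $\bE^\infty$, i.e.\ no nonzero, smooth, rapidly decreasing solutions of $Pc = 0$ on the closed half-line $\overline \R_+$. With~(IV) in hand, Theorem~\ref{main} supplies the unique $c' \in \bE^0$ with $Pc' = -P(\chi t^\Delta)$, and $u = \ee^{\ii\phi}(\chi t^\Delta + c')$ is the unique solution claimed in the proposition.
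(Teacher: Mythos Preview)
Your approach is essentially the paper's: the paper works with $L$ directly on the spaces $\mathcal Z^\delta = \ee^{\ii\phi}\,S_{\cl,\ls+1}^\delta(\overline\R_+)$ instead of conjugating to $P$, and it handles the degenerate top level (where its $\bT^0$ vanishes) by fixing a $u_0\in\mathcal Z^\Delta$ with leading coefficient~$1$ by hand and then invoking the scheme for $j\geq 1$---which is exactly your splitting $c=\chi\,t^\Delta+c'$ followed by an index shift, yielding your nonzero $\bT^j=\ii(j+1)(\ls+1)l_0'(\mu)$. The paper likewise identifies $\bE^\infty=\widetilde\bE^\infty=\cS(\overline\R_+)$ (using $\mu\in\R$) and simply asserts condition~(IV) without further argument, so your discussion of it already goes beyond what the paper supplies.
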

\begin{proof}
For $\delta\in\C$, we set
\[
  \mathcal Z^\delta = \bigl\{\ee^{\ii \mu\,\frac{t^{\ls+1}}{\ls+1}}
  c(t)\bigm| c\in S_{\cl,\ls+1}^\delta(\overline\R_+)\bigr\}.
\]
We further set $\Gamma_0^\delta u = c_0$, $\Gamma_1^\delta u = c_1$
for $u =\ee^{\ii \mu\,\frac{t^{\ls+1}}{\ls+1}} c(t)\in \mathcal
Z^\delta$ if $c(t) = c_0 t^\delta + c_1 t^{\delta- (\ls+1)} +
O(t^{\delta - 2(\ls+1)})$ as $t\to\infty$. Notice that
$\Gamma_0^\delta u = \Gamma_1^{\delta + (\ls+1)}u$ if $u\in
\mathcal Z^\delta\subset \mathcal Z^{\delta+(\ls+1)}$.

A direct computation yields that $L\colon \mathcal Z^\delta \to
\mathcal Z^{\delta+m\ls}$, $\Gamma_0^{\delta+m\ls}(Lu) = l_0(\mu)\,
\Gamma_0^\delta u=0$, and
\[
  \Gamma_1^{\delta+m\ls}(Lu) = l_0(\mu)\, \Gamma_1^\delta u - \ii
  \left[ \delta\, l_0'(\mu) + \ls \mu \, l_0''(\mu)/2 + \ii\,
    l_1(\mu)\right] \Gamma_0^\delta u.
\]
So, $L\colon \mathcal Z^\delta \to \mathcal
Z^{\delta+m\ls-(\ls+1)}$ in view of $l_0(\mu)=0$ and
\[
  \Gamma_0^{\delta+m\ls-(\ls+1)}(Lu) = - \ii \left[ \delta\, l_0'(\mu)
    + \ls \mu \, l_0''(\mu)/2 + \ii\, l_1(\mu)\right] \Gamma_0^\delta
  u.
\]  
Note that $\Gamma_0^{\Delta+m\ls-(\ls+1)}(Lu)=0$ for all $u\in\mathcal
Z^\Delta$ by construction.

\medskip

We now apply the abstract scheme with $\bL=L$, 
\[
  \bE^j = \mathcal Z^{\Delta - j(\ls+1)}, \quad \widetilde\bE^j = \mathcal
  Z^{\Delta + m\ls - (j+1)(\ls+1)}, \quad \bF^j = \widetilde\bF^j =\C,
\]
$\bs^j= \Gamma_0^{\Delta-j(\ls+1)}$, $\widetilde\bs^{\,j} =
\Gamma_0^{\Delta+m\ls-(j+1)(\ls+1)}$, and $\bT^j$ is multiplication by
$\ii j(\ls+1)$. Then $\bE^\infty = \widetilde\bE^\infty
=\cS(\overline\R_+)$ in view of $\mu\in\R$. We observe that properties
(I) through (IV) are fulfilled for $j\geq1$.

Accordingly, we choose a $u_0 \in \mathcal Z^\Delta$ with
$\Gamma_0^\Delta u=1$. There is a unique solution $v\in \mathcal
Z^{\Delta-(\ls+1)}$ to the equation $L(u_0+v)=0$. (To see this, write
$Lv=-\,Lu_0 \in \mathcal Z^{\Delta+m\ls-2(\ls+1)}$.) Then $u=u_0+v$ is
the sought solution.
\end{proof}

\begin{cor}
Suppose that the polynomial $l_0$ has $m$ simple real roots $\mu_1<
\dotsc < \mu_m$,
\[
  l_0(\tau) = \prod_{h=1}^m (\tau- \mu_h).
\]
Then the ordinary differential operator $L$ possesses a fundamental
system
\[
  \ee^{\ii \mu_1\,\frac{t^{\ls+1}}{\ls+1}} c_1(t), \dotsc, \ee^{\ii
    \mu_m\,\frac{t^{\ls+1}}{\ls+1}} c_m(t),
\]
where $c_h \in S_{\cl,\ls+1}^{\Delta_h}(\overline\R_+)$ with $\Delta_h
= \Delta(\mu_h)$ for $1\leq h\leq m$.
\end{cor}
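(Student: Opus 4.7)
\smallskip

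\noindent\textbf{Proof proposal.} The plan is to reduce the corollary to $m$ separate applications of the preceding proposition, one for each real simple root $\mu_h$, and then to upgrade the $m$ resulting solutions to a fundamental system by an asymptotic/Wronskian argument.

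First I would verify the hypotheses of the proposition at each $\mu_h$: since $l_0$ factors as $\prod_h (\tau-\mu_h)$ with all $\mu_h$ distinct real numbers, each $\mu_h$ is automatically a simple real root of $l_0$, so the proposition applies and produces a solution $u_h(t) = \ee^{\ii\mu_h\,\frac{t^{\ls+1}}{\ls+1}} c_h(t)$ with $c_h\in S_{\cl,\ls+1}^{\Delta_h}(\overline\R_+)$ and $c_h(t) = t^{\Delta_h} + O(t^{\Delta_h-(\ls+1)})$ as $t\to\infty$, where $\Delta_h = \Delta(\mu_h)$. Since $L$ is a regular ordinary differential operator of order $m$ (the leading coefficient is $1$ and all $a_r$ are smooth on $\overline\R_+$), its solution space on $\overline\R_+$ has dimension exactly $m$, so it suffices to show that $u_1,\dotsc,u_m$ are linearly independent.

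For the linear independence I would compute the Wronskian $W(t) = \det\bigl(D_t^{k-1}u_h(t)\bigr)_{1\le k,h\le m}$ asymptotically as $t\to\infty$. A direct computation using $D_t\bigl(\ee^{\ii\mu_h\,\frac{t^{\ls+1}}{\ls+1}}\bigr) = \mu_h t^{\ls}\,\ee^{\ii\mu_h\,\frac{t^{\ls+1}}{\ls+1}}$ and the asymptotic expansion of $c_h$ gives
\[
  D_t^{k-1} u_h(t) = \mu_h^{k-1}\,t^{(k-1)\ls+\Delta_h}\,\ee^{\ii\mu_h\,\frac{t^{\ls+1}}{\ls+1}}\bigl(1+O(t^{-(\ls+1)})\bigr)
\]
as $t\to\infty$. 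Factoring out $\ee^{\ii\mu_h\,\frac{t^{\ls+1}}{\ls+1}} t^{\Delta_h}$ from the $h$-th column and $t^{(k-1)\ls}$ from the $k$-th row reduces the leading determinant to a Vandermonde determinant in the distinct $\mu_h$'s, which is nonzero. Hence $W(t)\neq0$ for $t$ large, and the $u_h$'s are linearly independent, forming the claimed fundamental system.

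The only genuine obstacle is the Wronskian asymptotic; once one sees that all derivatives preserve the same exponential factor and that the polynomial prefactors produce a Vandermonde pattern, the argument is essentially forced. An alternative route would be to test linear dependence $\sum \alpha_h u_h\equiv 0$ by multiplying through by $\ee^{-\ii\mu_k\,\frac{t^{\ls+1}}{\ls+1}}$ and using that, for $h\ne k$, the oscillatory factor $\ee^{\ii(\mu_h-\mu_k)\,\frac{t^{\ls+1}}{\ls+1}}$ averages to zero against any fixed polynomial growth, forcing $\alpha_k=0$; but the Wronskian argument is cleaner and avoids any averaging lemma.
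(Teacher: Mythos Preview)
Your argument is correct. The paper states this corollary without proof, treating it as immediate once the proposition has been applied to each root $\mu_h$; you have simply supplied the omitted linear-independence step via the Wronskian/Vandermonde computation, which is the natural way to do it.

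One small comment on presentation: your asymptotic formula
\[
  D_t^{k-1}u_h(t)=\mu_h^{k-1}\,t^{(k-1)\ls+\Delta_h}\,\ee^{\ii\mu_h\,\frac{t^{\ls+1}}{\ls+1}}\bigl(1+O(t^{-(\ls+1)})\bigr)
\]
is literally awkward when some $\mu_h=0$ (the leading coefficient vanishes for $k\ge2$, so ``$0\cdot(1+O(\dotsb))$'' is not informative). It is cleaner to phrase the computation as: after factoring $\ee^{\ii\mu_h\,t^{\ls+1}/(\ls+1)}t^{\Delta_h}$ from column $h$ and $t^{(k-1)\ls}$ from row $k$, the resulting matrix converges entrywise to $\bigl(\mu_h^{k-1}\bigr)_{k,h}$ as $t\to\infty$. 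This covers the case $\mu_h=0$ uniformly and yields the same Vandermonde conclusion.
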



\subsection{The strictly hyperbolic Cauchy problem}\label{Sec32}

The following example is briefly touched upon. This example is taken from
\cite[Sec.~5]{Dui1996} and revised to fit into the abstract
scheme. More mathematical detail will be provided for the example
studied in Section~\ref{prop_conormal}, which is similar in
spirit. For the present example, the assumptions needed for the scheme
to work follow from the theory of Fourier integral operators. See
e.g.~\cite{Dui1996,Hoe1985b} as well as the notational paragraph at
the end of Section~\ref{Sec1}.

\medskip

Let $X$ be $\cC^\infty$ manifold, $P\in \Psi_\cl^m(X)$, $Q_r\in
\Psi_\cl^{m_r}(X)$ for $1\leq r\leq \mu$, and $Y\subset X$ be a closed
$\cC^\infty$ hypersurface. We assume the following:
\begin{itemize}
\item The principal symbol $p\in S^{(m)}(\dot T^* X)$ of $P$ is
  real-valued and characteristic curves of $P$ meet $Y$ transversally.
\item The algebraic problem $p(y,\xi)=0$ and $\xi\bigr|_{T_yY} = \eta$
  has, for any $(y,\eta)\in \dot T^* Y$, exactly $\mu$ solutions
  $\xi_1(y,\eta),\dots,\xi_\mu(y,\eta)\in T_y^* X$.
\item The matrix $\bigl( q_r(y,\xi_j(y,\eta))\bigr)_{j,r=1,\dots,\mu}$
  is non-singular for all $(y,\eta)\in \dot T^* Y$. Here, $q_r\in
  S^{(m_r)}(\dot T^* X)$ is the principal symbol of $Q_r$.
\end{itemize}
Let $\gamma_Y\colon \cC^\infty(X)\to \cC^\infty(Y)$, $v\mapsto
v\bigr|_Y$ be restriction to $Y$.

\begin{thm}
Under the assumptions above, there is a neighborhood $X_0\subseteq X$
of $Y$ in which a parametrix $E$ to the problem
\[
\left\{
\begin{aligned}
  Pu &= 0 && \textup{in $X_0$,} \\
  \gamma_Y Q_r u&= g_r && \textup{on $Y$,} & 1\leq r\leq \mu,
\end{aligned}
\right.
\]
exists. This parametrix $E\colon \mathscr E'(Y;\C^\mu)\to \mathscr
D'(X_0)$ is of the form
\[
  \bigl(g_1,\dotsc,g_\mu\bigr) \mapsto \sum\nolimits_{r=1}^\mu E_r
  g_r,
\]
where $E_r \in I_\cl^{-m_r-1/4}(X_0,Y;C)$ for $1\leq r\leq \mu$ with
the canonical relation $C$ being the flow-out from
$\bigl\{(x,\xi,y,\eta)\in \dot T^* X_0\times \dot T^*Y\bigm| x=y,\,
\xi\bigr|_{T_yY}=\eta,\, p(x,\xi)=0 \bigr\}$ under the Hamiltonian
vector field $H_p$.
\end{thm}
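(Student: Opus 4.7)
The plan is to verify conditions (I) through (IV) of the abstract scheme for the operator
\[
  \bL \colon (E_1,\dots,E_\mu) \longmapsto \bigl((PE_r)_r,\,
  (\gamma_Y Q_s E_r)_{s,r}\bigr),
\]
acting on tuples of FIOs of the stated orders, so that the sought parametrix is the unique solution of $\bL E = (0,\, I_{\mu\times\mu})$, written as $E = \sum_r E_r g_r$. I would choose $X_0$ at the outset to be a tubular neighborhood of $Y$ small enough that the flow-out $C$ is an embedded conic Lagrangian, its projection $C\to \dot T^*Y$ is a $\mu$-sheeted covering (with sheets indexed by $\xi_j(y,\eta)$), and no bicharacteristic emanating from $Y$ returns to $Y$.

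For the filtration I take
\[
  \bE^j = \bigoplus_{r=1}^\mu I_\cl^{-m_r-1/4-j}(X_0,Y;C),
\]
and let $\widetilde\bE^j$ be the direct sum of $\bigoplus_r I_\cl^{m-m_r-5/4-j}(X_0,Y;C)$ (with the one-order gain encoded, since $p$ vanishes on the projection of $C$ into $\dot T^*X_0$) and the space of $\mu\times\mu$ matrices of pseudodifferential operators with $(s,r)$-entry in $\Psi_\cl^{m_s-m_r-j}(Y)$. The principal symbol maps $\bs^j,\widetilde\bs^{\,j}$ are the standard ones on each factor. The transport operator $\bT^j$ then decomposes accordingly: on the FIO component it acts as the transport operator $T_C^P$ on each $\sigma_\psi(E_r)$ along the bicharacteristic flow of $H_p$; on the matrix component it sends $\sigma_\psi(E_r)|_C$ to the matrix whose $(s,r)$-entry is obtained by restricting $q_s\,\sigma_\psi(E_r)$ to the $j$-th sheet of $C$ over $\dot T^*Y$.

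Conditions (I) and (III) are standard: (I) is the short exact sequence of principal symbols for classical FIOs (and $\Psi$DOs on $Y$), while (III) is Borel summation of homogeneous symbols on the fixed Lagrangian $C$ and on $\dot T^*Y$, a routine part of the FIO calculus. Condition (IV), well-posedness on the residual class, amounts to existence and uniqueness of a $C^\infty$ solution to $Pu=0$ in $X_0$ with $\gamma_Y Q_r u = g_r$ for $C^\infty$ data $g_r$; this holds in the tubular neighborhood chosen above by classical hyperbolic Cauchy theory.

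The heart of the argument is condition (II). Given prescribed right-hand sides, one first uses the hypothesis that the matrix $\bigl(q_s(y,\xi_j(y,\eta))\bigr)_{s,j}$ is nonsingular to determine the boundary values of the $\sigma_\psi(E_r)$ on $C\cap \dot T^*Y$ pointwise, and then integrates the first-order ODE $T_C^P \sigma_\psi(E_r) = \tau_r$ along the bicharacteristics on $C$ with those boundary values as initial data, obtaining a unique solution. Thus $\bT^j$ is bijective. The main obstacle I foresee is the FIO calculus bookkeeping at the boundary — specifically, verifying that $\gamma_Y Q_s E_r$ is genuinely a classical pseudodifferential operator on $Y$ (rather than a more general FIO) whose principal symbol is the pullback-evaluation described above. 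Both facts rely on the transversality hypothesis and the flow-out description of $C$. Once the setup is in place, Theorem~\ref{main} directly supplies the parametrix.
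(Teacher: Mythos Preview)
Your setup is essentially the same as the paper's: the same operator $\bL$, the same filtrations $\bE^j$, $\widetilde\bE^j$, $\bF^j$, $\widetilde\bF^j$, the same principal symbol maps, and the same description of $\bT^j$ as the pair consisting of the transport operator $T_C^P$ along bicharacteristics together with the boundary map built from the nonsingular matrix $\bigl(q_s(y,\xi_j(y,\eta))\bigr)$. Your discussion of (I)--(III) and of bijectivity of $\bT^j$ matches the paper's.

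The one genuine overstep is your invocation of condition~(IV). The statement asserts only the existence of a \emph{parametrix}, and for that the paper verifies (I) through (III) only; by remark~(b) in the introduction this already yields $E\in\bE^0$ with $\bL E - (0,I)\in\widetilde\bE^\infty$, i.e.\ a parametrix. Condition~(IV) is neither needed nor, under the stated hypotheses, available: $P$ is merely a pseudodifferential operator with real principal symbol and transversal bicharacteristics, and the assumptions are purely microlocal. The paper explicitly remarks after the proof that only with \emph{additional} global assumptions --- $P$ a differential operator and genuine strict hyperbolicity --- does $\cC^\infty$ well-posedness hold and upgrade the parametrix to a fundamental solution. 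So your claim that (IV) ``holds in the tubular neighborhood chosen above by classical hyperbolic Cauchy theory'' is not justified by the hypotheses as stated, and in any case is superfluous for what is being proved.
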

\begin{proof}
First observe that $C\subset (T^*X_0 \times T^* Y)\setminus0$ is an
embedded submanifold if the neighborhood $X_0$ of $Y$ is chosen small
enough. (It has actually $\mu$ components corresponding to the $\mu$
characteristic roots $\xi_j$.) So, the class $I_\cl^\nu(X_0,Y;C)$ for
$\nu\in\C$ of Fourier integral operators is well-defined.

Then we set up the scheme as follows: $\bL$ is the linear operator
\[
  \bigl\{E_s\bigr\}_{s=1}^\mu \mapsto
  \biggl(\bigl\{PE_s\bigr\}_{s=1}^\mu , \bigl\{\gamma_Y Q_r
  E_s\bigr\}_{r,s=1}^\mu \biggr).
\]
Moreover,
\begin{align*}
  \bE^j&= \bigoplus\nolimits_{s=1}^\mu I_\cl^{-m_s-j-1/4}(X_0,Y;C),
  \\ \widetilde\bE^j &= \bigoplus\nolimits_{s=1}^\mu
  I_\cl^{m-m_s-j-5/4}(X_0,Y;C) \oplus \bigoplus\nolimits_{r,s=1}^\mu
  \Psi_\cl^{m_r-m_s-j}(Y), \\ \bF^j &= \bigoplus\nolimits_{s=1}^\mu
  S^{(-\bar{m}_s-j-1/4)}(C;L), \\ \widetilde \bF^j &=
  \bigoplus\nolimits_{s=1}^\mu S^{(m-\bar{m}_s-j-5/4)}(C;L)\oplus
  \bigoplus\nolimits_{r,s=1}^\mu S^{(m_r-m_s-j)}(\dot T^* Y),
\end{align*}
$\bs^j$ and $\widetilde\bs^{\,j}$ are composed of the respective
principal symbol maps, and the transport operators are given as
\[
  \bT^j = \Bigl(\{T_C^P\}_{1\leq s\leq\mu},H^j\Bigr)
\]
where the linear operator $H^j\colon \bF^j\to
\bigoplus\nolimits_{r,s=1}^\mu S^{(m_r-m_s-j)}(\dot T^* Y)$ is
composition with the principal symbol of $\gamma_Y\in
I_\cl^{1/4}(Y,X_0;R)$ (here $R=(\dot N_{Y\times X_0}\Delta_Y)'$)
followed by pointwise multiplication from the left by the matrix
$\bigl( q_r(y,\xi_j(y,\eta))\bigr)_{j,r=1,\dots,\mu}$. This provides
initial conditions for solving the transport equations.

By construction, conditions (I) through (III) are fulfilled.
\end{proof}

The (microlocal) constructions above can be globalized upon making
additional global assumptions on the characteristic flow. Moreover,
for the genuine strictly hyperbolic Cauchy problem and $P$ being a
differential operator, $\cC^\infty$ well-posedness turns the
parametrix $E$ into a fundamental solution. For details, see again
\cite{Dui1996,Hoe1985b}.


\subsection{Propagation of conormality}\label{prop_conormal}

Propagation of conormality (see Appendix~\ref{conormal_distributions})
is much related to the well-posedness of the hyperbolic Cauchy
problem. It is a more precise statement than the better known
statement about the mere propagation of singularities (which, in turn, is a
microlocalized version of the basic energy inequalities), insofar as
the kind of singularities being propagated gets specified.

We consider the strictly hyperbolic Cauchy problem. Coordinate
invariance being known, we can work in local coordinates $(t,x)\in
(0,T)\times\R^n$. So, let
\[
  L = D_t^m + \sum_{\substack{j+|\alpha|\leq m,\\j<m}}
  a_{j\alpha}(t,x) D_t^j D_x^\alpha
\]
with coefficients $a_{j\alpha}\in
\cC_{\textup{b}}^\infty([0,T]\times\R^n)$. We assume $L$ to be
strictly hyperbolic, i.e.,
\[
  \sigma_\psi^m(L)(t,x,\tau,\xi) =
  \prod_{h=1}^m (\tau - \mu_h(t,x,\xi))
\]
for real-valued $\mu_h \in S^{(1)}([0,T]\times\R^n\times
\dot\R^n)$, where, in addition,
\[
  \left|\mu_h(t,x,\xi) - \mu_{h'}(t,x,\xi)\right|\gtrsim |\xi|, \quad
  h\neq h'.
\]  

We are interested in the inhomogeneous Cauchy problem
\begin{equation}\label{cauchy2}
\left\{ \enspace
\begin{aligned}
  & L u = f(t,x), && (t,x)\in (0,T)\times\R^n, \\
  & D_t^p u\bigr|_{t=0} = g_p(x) , && 0\leq p\leq m-1.
\end{aligned}
\right.
\end{equation}

Let $\Sigma\subset \R^n$ by a $\cC^\infty$ hypersurface, say of
bounded geometry. Let $T>0$ be small enough so that we have $m$
characteristic hypersurfaces $\Sigma_1,\dots, \Sigma_m \subset
[0,T]\times \R^n$ of $L$ emanating from $\{0\}\times \Sigma$, with
$\Sigma_h$ belonging to the $h$\/th characteristic root~$\mu_h$.

\begin{thm}
Let $\mu\in\C$, $f= \sum_{h=1}^m f_h$, where $f_h\in
I_\cl^{m+\mu-1}([0,T]\times\R^n, \Sigma_h)$ for $1\leq h\leq m$, and
$g_p\in I_\cl^{\mu+p+1/4}(\R^n,\Sigma)$ for $0\leq p\leq m-1$. Then
the unique solution $u$ to \eqref{cauchy2} is of the form $u=
\sum_{h=1}^m u_h$, where $u_h\in I_\cl^\mu([0,T]\times\R^n,\Sigma_h)$
for $1\leq h\leq m$.
\end{thm}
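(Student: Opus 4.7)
The plan is to apply Theorem~\ref{main} with a setup tailored to the fact that strict hyperbolicity makes the $m$ characteristic conormal cones $N^*\Sigma_h\setminus 0$ pairwise disjoint. Let $\mathcal{I}^\nu$ denote the Fr\'echet space of distributions on $[0,T]\times\R^n$ expressible as $\sum_{h=1}^m v_h$ with $v_h\in I_\cl^\nu([0,T]\times\R^n,\Sigma_h)$. I would take
\[
  \bE^j = \mathcal{I}^{\mu-j}, \qquad \widetilde\bE^j = \mathcal{I}^{m+\mu-j-1} \oplus \bigoplus_{p=0}^{m-1} I_\cl^{\mu+p+1/4-j}(\R^n,\Sigma),
\]
and $\bL u = \bigl(Lu,\,(D_t^p u|_{t=0})_{p=0}^{m-1}\bigr)$. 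Since $\sigma_\psi^m(L)$ vanishes on each $N^*\Sigma_h\setminus 0$, composition with $L$ maps $I_\cl^{\mu-j}(\Sigma_h)$ into $I_\cl^{m+\mu-j-1}(\Sigma_h)$ with a gain of one order; restriction to $\{t=0\}$ is transversal to each $\Sigma_h$, so $D_t^p v_h|_{t=0}\in I_\cl^{\mu+p+1/4}(\R^n,\Sigma)$, and $\bL$ has the advertised mapping properties.

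The principal symbol map $\bs^j$ on $\mathcal{I}^{\mu-j}$ would send $u=\sum v_h$ to $(\sigma_\psi(v_h))_h$. A key point: this is well-defined because given two decompositions $\sum v_h = \sum v_h'$, each difference $v_h-v_h' = -\sum_{h'\neq h}(v_{h'}-v_{h'}')$ has wavefront set contained in $N^*\Sigma_h \cap \bigcup_{h'\neq h}N^*\Sigma_{h'}$, and the separation $|\mu_h-\mu_{h'}|\gtrsim|\xi|$ forces this intersection to be empty, so $v_h-v_h'$ is smooth and has vanishing principal symbol at order $\mu-j$. The transport operator $\bT^j$ correspondingly splits into two blocks: the first sends $(\sigma_h)_h \mapsto (T_{N^*\Sigma_h}^L \sigma_h)_h$, the transport along the Hamilton flow of $\mu_h$ on $N^*\Sigma_h$; the second sends $(\sigma_h)_h \mapsto \Bigl(\sum_{h=1}^m \mu_h^p\,\sigma_h|_{N^*\Sigma}\Bigr)_{p=0}^{m-1}$, reflecting the symbolic action of $D_t^p|_{t=0}$ on a distribution conormal to $\Sigma_h$.

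Verifying the hypotheses: (I) is standard for conormal distributions once the well-definedness above is recorded. For (II), I would decouple the system --- first solve the pointwise Vandermonde system $(\mu_h^p)_{p,h}(\sigma_h|_{N^*\Sigma})_h = (w_p)_p$ on $N^*\Sigma\setminus 0$, whose determinant $\prod_{h<h'}(\mu_{h'}-\mu_h)$ is non-vanishing by strict hyperbolicity, then solve the first-order transport ODE $T_{N^*\Sigma_h}^L\sigma_h = v_h$ on $N^*\Sigma_h$ with the just-determined initial symbols on $N^*\Sigma$, provided $T$ is small enough that the Hamilton flow from $\{t=0\}$ foliates each $N^*\Sigma_h$. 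Condition (III) follows by applying the Borel-type asymptotic summation for $I_\cl^\bullet(\Sigma_h)$ componentwise and summing over $h$. Condition (IV) reduces, on $\bE^\infty=\cC^\infty([0,T]\times\R^n)$ and $\widetilde\bE^\infty=\cC^\infty([0,T]\times\R^n)\oplus\bigoplus_p \cC^\infty(\R^n)$, to the classical $\cC^\infty$ well-posedness of the strictly hyperbolic Cauchy problem.

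The hardest part will be making the quotient space $\mathcal{I}^\nu$ and its symbol map $\bs^j$ fully rigorous --- in particular, identifying $\mathcal{I}^{-\infty}=\cC^\infty$ via microlocal cutoffs subordinate to the separating cones $N^*\Sigma_h$ --- and verifying that the second row of the scheme, which mixes conormal distributions on $[0,T]\times\R^n$ with boundary distributions on $\R^n$, remains exact after the transport operator has been applied. Once these structural points are secured, Theorem~\ref{main} delivers the decomposition $u=\sum_{h=1}^m u_h$ with $u_h\in I_\cl^\mu([0,T]\times\R^n,\Sigma_h)$, uniqueness of $u$ being inherited from the $\cC^\infty$ uniqueness of \eqref{cauchy2}.
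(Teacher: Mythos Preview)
Your proposal is correct and follows essentially the same approach as the paper: you set up the abstract scheme with $\bE^j=\mathcal I^{\mu-j}$, $\widetilde\bE^j$ the sum of the conormal right-hand-side space and the boundary conormal spaces, take $\bs^j$ to be the componentwise principal symbol, and let $\bT^j$ consist of the transport operators $T_{\dot N^*\Sigma_h}^L$ together with the Vandermonde restriction map, then invoke $\cC^\infty$ well-posedness for (IV). The paper omits the well-definedness argument for $\bs^j$ that you supply, but otherwise the setups coincide.
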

\begin{proof}
The bundles $\Omega_{\dot N^* \Sigma_h}^{1,\textup{v}}$ (see
Appendix~\ref{conormal_distributions}) are trivialized as we work in
local coordinates. Hence, $\sigma_\psi^\nu(v)\in S^{(\bar\nu)}(\dot
N^*\Sigma_h)$ for $v\in I_\cl^\nu([0,T]\times \R^n,\Sigma_h)$, where
now $\bar\nu = \nu +(n-1)/4$. Similarly for $g\in
I_\cl^{\nu+1/4}(\R^n;\Sigma)$ and $\sigma_\psi^\nu(g)\in
S^{(\bar\nu)}(\dot N^*\Sigma)$.

We apply the abstract scheme employing the following setup: \ $\bL u =
\left(Lu,u\bigr|_{t=0},D_tu\bigr|_{t=0},\dotsc,\right.$
$\left.D_t^{m-1}u\bigr|_{t=0}\right)$,
\[
\begin{aligned}
  \bE^j &= \biggl\{\sum\nolimits_{h=1}^m u_h\biggm| \text{$u_h\in
    I_\cl^{\mu-j}([0,T]\times\R^n,\Sigma_h)$ for $1\leq h\leq m$}
  \biggr\}, \\ \widetilde\bE^j &= \biggl\{\sum\nolimits_{h=1}^m
  f_h\biggm| \text{$f_h\in
    I_\cl^{m+\mu-j-1}([0,T]\times\R^n,\Sigma_h)$ for $1\leq h\leq m$}
  \biggr\} \oplus \bigoplus_{p=0}^{m-1}
  I_\cl^{\mu+p-j+1/4}(\R^n,\Sigma).
\end{aligned}
\]
To justify this, note that $\sigma_\psi^{m+\mu-j}(Lu)=0$ for $u\in
I_\cl^{\mu-j}([0,T]\times\R^n,\Sigma_h)$ due to the fact that
$\sigma_\psi^m(L)\bigr|_{\dot N^*\Sigma_h}=0$. We further set
\[
  \bF^j = \bigoplus_{h=1}^m S^{\bar\mu-j}(\dot N^*\Sigma_h), \quad
  \widetilde\bF^j = \bigoplus_{h=1}^m S^{m+\bar\mu-j-1}(\dot
  N^*\Sigma_h) \oplus \bigoplus_{p=0}^{m-1} S^{(\bar\mu+p-j)}(\dot
  N^*\Sigma)
\]
and
\begin{align*}
  \bs^j(u) &= \Bigl( \{\sigma_\psi^{\mu-j}(u_h)\}_{1\leq h\leq
    m}\Bigr), \\
  \widetilde\bs^{\,j}\bigl(f, \{g_p\}_{0\leq p\leq m-1}\bigr) &=
  \Bigl(\{\sigma_\psi^{m+\mu-j-1}(f_h)\}_{1\leq h\leq m},
  \{\sigma_\psi^{\mu+p-j+1/4}(g_p)\}_{0\leq p\leq m-1} \Bigr)
\end{align*}
writing $u=\sum_h u_h$ and $f=\sum_h f_h$ as above. The transport
operators are
\[
  \bT^j = \Bigl( \{T_{\Sigma_h}^p\}_{1\leq h\leq m},H^j\Bigr)
\]
where $H^j \colon \bF^j \to \bigoplus_{p=0}^{m-1} S^{(\bar\mu+p-j)}(\dot
N^*\Sigma)$ is (up to a multiplicative factor of $(2\pi)^{-1}$) the
linear operator of restriction to $\{t=0\}$ followed by pointwise
multiplication from the left by the matrix
$\bigl(\mu_h(0,x,\xi)^p\bigr)_{\substack{1\leq h\leq m,\\0\leq p\leq
    m-1}}$ for $(x,\xi)\in \dot N^*\Sigma$. To see this note that
$\sigma_\psi^{\nu+p}(D_t^p v) = \tau^p \sigma_\psi^\nu(v)$ for $v\in
I_\cl^\nu([0,T]\times\R^n;\Sigma_h)$ and $\tau= \mu_h (0,x,\xi)$ for
$(0,x,\tau,\xi)\in \dot N^*\Sigma_h$. Further note that the matrix
$\bigl(\mu_h(0,x,\xi)^p\bigr)_{\substack{1\leq h\leq m,\\0\leq p\leq
    m-1}}$ is non-singular as a Vandermonde matrix, considering that
the characteristic roots are pairwise distinct.

Hence, the $\bT^j$ are indeed bijective (as implied by the method of
characteristics) and conditions (I) through (IV) of the abstract
scheme are fulfilled. Notice that condition (IV) is nothing else than
the $\cC^\infty$ well-posedness of the Cauchy problem \eqref{cauchy2}.
\end{proof}


\subsection{Green's functions}

It is a well-known fact that the future light cone is the singular
support of the forward Green's function (henceforth called Green's
function) of the wave operator and that, in odd space dimensions, the
Green's function is smooth from both sides up to the future light
cone.\footnote{Indeed, it is zero outside the future light cone by
finite propagation speed.} This property holds in greater generality
as we are going to show now. As we shall see, only certain algebraic
properties of the amplitude functions, which arise when writing the
conormal distributions under consideration as oscillatory integrals,
are indeed responsable for that result to hold, and these algebraic
properties are propagated. General references are
\cite{Bou1979,HP1979}.

\begin{rem}
A similar discussion is possible for even space dimensions. In this
case, the result is that the Green's function of the wave operator is
zero outside the future light cone, in particular, smooth from the
outside of the future light cone up the future light cone, and it is
what one calls diffuse from the inside.
\end{rem}

We consider a second-order strictly hyperbolic differential operator
$P$ with coefficients in $\cC_{\textup{b}}^\infty([0,T]\times\R^n)$,
which we write as
\begin{equation}\label{ppP}
  P =D_t^2 + p_1(t,x,D_x)D_t + p_2(t,x,D_x).
\end{equation}
We denote by $q_l(t,x,\xi)$ for $l=1,2$ the principal symbol of
$p_l(t,x,D_x)$. We assume that $q_1$, $q_2$ are real-valued and that
\[
  q_2(t,x,\xi) \lesssim -|\xi|^2.
\]

\begin{ex}
An example is the wave operator $\Box =D_t^2 +c^2(t,x) \Delta_x$
with variable propagation speed, where $c\in
\cC_{\textup{b}}^\infty([0,T]\times\R^n)$ and $c(t,x)\gtrsim 1$.
\end{ex}

Let $\mu^\pm\in S^{(1)}([0,T]\times\R^n\times\dot\R^n)$ with 
\[
  \mu^\pm(t,x,\xi) = -\,\frac{1}2\,q_1(t,x,\xi) \pm
  \frac12\sqrt{q_1^2(t,x,\xi) - 4 q_2(t,x,\xi)}
\]
denote the characteristic roots of $P$. Notice that $\pm \,
\mu^\pm(t,x,\xi) \gtrsim |\xi|$ and that
\begin{equation}\label{srt}
  \mu^\pm(t,x,-\xi) = -\,\mu^\mp(t,x,\xi).
\end{equation}
Further let $[0,T]\ni t \mapsto \gamma^\pm(t;x^0,\xi^0)=\left(
t,x(t),\tau(t),\xi(t)\right)$ for $(x^0,\xi^0)\in\R^n\times\dot\R^n$
denote the null bicharacteristic emanating from $\left(0,x^0,
\mu^\pm(0,x^0,\xi^0),\xi^0\right)$. It is given as the solution to the
system
\[
\left\{ \enspace
\begin{aligned}
  & x'(t) = -\,\mu_\xi^\pm(t,x(t),\xi(t)), \quad \xi'(t) =
  \mu_x^\pm(t,x(t),\xi(t)), \\ & x(0) = x^0, \quad \xi(0) = \xi^0,
\end{aligned}
\right.
\]
and $\tau(t) = \mu^\pm(t,x(t),\xi(t))$ follows. Notice that in view
of \eqref{srt} we have that
\begin{equation}\label{lup}
  x^\pm(t;x^0;\xi^0) = x^\mp(t;x^0;-\xi^0), \quad \xi^\pm(t;x^0;\xi^0)
  = -\,\xi^\mp(t;x^0;-\xi^0),
\end{equation}
whereas $\pm\tau^\pm(t;x^0,\xi^0)>0$.

We also introduce the Lagrangian manifolds with boundary
\[
  \Lambda^\pm = \{\gamma^\pm(t;0,\xi^0)\mid t\in [0,T],\,\xi^0\in
  \dot\R^n\} \subset \dot{T}^\ast([0,T]\times\R^n).
\]
Notice that $\Lambda^+\cap \Lambda^-= \emptyset$. $\Lambda^\pm$ are
parametrized by the non-degenerate phase functions
$\phi^\pm=\phi^\pm(t,x,\xi)$ which are found as solutions to the
eikonal equations
\begin{equation}\label{eikonal}
\left\{ \enspace
\begin{aligned}
  \partial_t \phi^\pm &= \mu^\pm(t,x,\nabla \phi^\pm), && (t,x)\in (0,T)\times\R^n, \\
  \phi^\pm\bigr|_{t=0} &= x\cdot\xi.
\end{aligned}
\right.
\end{equation}
Here, $\xi\in\dot\R^n$ acts as a parameter. These are Hamilton-Jacobi
equation which are solved utilizing the method of
characteristics. Notice that, in view of our assumptions,
Eq.~\eqref{eikonal} can be solved up to time $t=T$.

\begin{ex}
For the wave operator $P = D_t^2 + c^2(t)\Delta_x$ with $c$
independent of $x$, we have
\[
  \Lambda^\pm = \{(t,x,\tau,\xi)\mid x = \mp\, C(t) \xi/|\xi|, \,
  \tau= \pm\, c(t)|\xi|,\,t\in[0,T],\, \xi\in\dot\R^n\}.
\]  
and $\phi^\pm(t,x,\xi) = x\cdot\xi \pm C(t)|\xi|$, where $C(t) =
\int_0^t c(s)\,\dd s$.
\end{ex}

At last, we introduce the future light cone with vertex at the origin
$\mathscr O=(0,0)$,
\[
  \mathcal C = \{(t,x)\in [0,T]\times\R^n\mid
  \text{$x=x^\pm(t;0,\xi^0)$ for some $\pm\,\xi^0\in \dot\R^n$}\},
\]
see \eqref{lup}

\begin{lem}\label{332}
For $T>0$ small, $\mathcal C \setminus \mathscr O\subset
(0,T]\times\R^n$ is a hypersurface\footnotemark. Furthermore,
\[
  \dot{N}^*(\mathcal C \setminus \mathscr O) = (\Lambda^+ \sqcup
  \Lambda^-) \cap \{0<t\leq T\}.
\]  
For $(t,x,\tau,\xi) \in \dot{N}^*(\mathcal C \setminus \mathscr O)$,
we have that $(t,x,\tau,\xi) \in \Lambda^\pm$ if and only if $\pm
\tau>0$.
\end{lem}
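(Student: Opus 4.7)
The plan is to exploit the fact that $\Lambda^\pm$ are conic Lagrangians whose base projections are exactly $\mathcal C\setminus\mathscr O$, which forces them into $\dot N^*(\mathcal C\setminus\mathscr O)$, and then to match the components by a dimension count. To see that $\mathcal C\setminus\mathscr O$ is a smooth embedded hypersurface, I would use that $\mu^\pm$ is positively homogeneous of degree one in $\xi$, so the Hamilton flow satisfies $x^\pm(t;0,\lambda\xi^0)=x^\pm(t;0,\xi^0)$ for $\lambda>0$; together with \eqref{lup}, this identifies $\mathcal C\setminus\mathscr O$ as the image of the map $(0,T]\times S^{n-1}\ni(t,\omega)\mapsto(t,x^+(t;0,\omega))$, whose leading behavior is $x^+(t;0,\omega)=-t\,\mu_\xi^+(0,0,\omega)+O(t^2)$. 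Strict hyperbolicity makes $\omega\mapsto \mu_\xi^+(0,0,\omega)$ an immersion of the sphere, so for sufficiently small $T>0$ this parametrization is an embedding.

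Next, I would prove $\Lambda^\pm\cap\{0<t\le T\}\subseteq \dot N^*(\mathcal C\setminus\mathscr O)$. Both $\Lambda^\pm$ are conic Lagrangians, being flow-outs of the isotropic sets $\{(0,0,\mu^\pm(0,0,\xi^0),\xi^0):\xi^0\in\dot\R^n\}$ under the Hamilton vector fields $H_{\tau-\mu^\pm}$. The Liouville $1$-form $\alpha=\tau\,dt+\xi\cdot dx$ vanishes on any conic Lagrangian, because the radial field $R=\tau\,\partial_\tau+\xi\cdot\partial_\xi$ satisfies $\iota_R d\alpha=\alpha$ and is tangent to $\Lambda^\pm$ by conicity. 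Hence $\tau\,v_t+\xi\cdot v_x=0$ for every tangent vector $v$ to $\Lambda^\pm$. Since the base projection $\pi\colon\Lambda^\pm\to\mathcal C\setminus\mathscr O$, $(t,x,\tau,\xi)\mapsto(t,x)$, is a submersion (its one-dimensional kernel is precisely the radial direction), every tangent vector to $\mathcal C\setminus\mathscr O$ is of the form $\pi_*v$, so $(\tau,\xi)\in \dot N^*(\mathcal C\setminus\mathscr O)$.

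To promote inclusion to equality and to identify the two sheets by the sign of $\tau$, I observe that on $\Lambda^\pm$ the constraint $\tau=\mu^\pm(t,x,\xi)$ combined with the lower bound $\pm\mu^\pm(t,x,\xi)\gtrsim|\xi|$ forces $\pm\tau>0$. Thus $\Lambda^+$ and $\Lambda^-$ land in opposite halves of the punctured conormal line bundle, which yields both the disjointness and the sign characterization at once. Each of $\Lambda^\pm$ and each half of $\dot N^*(\mathcal C\setminus\mathscr O)$ is an $(n+1)$-dimensional connected manifold, so the inclusion of the previous step is an equality onto the corresponding component, giving $\dot N^*(\mathcal C\setminus\mathscr O)=(\Lambda^+\sqcup\Lambda^-)\cap\{0<t\le T\}$.

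The hard part will be the embedding assertion in the first step: short-time continuity yields injectivity with ease, but transversality of the image rests on the nondegeneracy of the Gauss-type map $\omega\mapsto \mu_\xi^+(0,0,\omega)$, which in turn is a geometric consequence of strict hyperbolicity. Once this input is in place, the remaining arguments are purely symplectic–geometric and quite short.
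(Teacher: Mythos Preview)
The paper states this lemma without proof, so there is no argument to compare against; your sketch is in fact more than the paper supplies. The symplectic part of your outline is clean and correct: a conic Lagrangian annihilates the canonical one-form, and once you know that the base projection $\pi\colon\Lambda^\pm\to\mathcal C\setminus\mathscr O$ is a submersion onto an $n$-dimensional manifold, the inclusion $\Lambda^\pm\subset\dot N^*(\mathcal C\setminus\mathscr O)$ follows immediately. Matching the two halves of the conormal bundle with $\Lambda^\pm$ via the sign of $\tau=\mu^\pm$ is also fine; the equal-dimension argument can be made rigorous by noting that an inclusion of (boundaryless) $n{+}1$-manifolds is open by invariance of domain, and $\Lambda^\pm\cap\{0<t\le T\}$ is closed in the $\{\pm\tau>0\}$ component of the conormal bundle because it is the image of the proper map $(t,\xi^0)\mapsto\gamma^\pm(t;0,\xi^0)$.

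The one point that deserves tightening is your appeal to ``strict hyperbolicity'' for the nondegeneracy of $\omega\mapsto\mu_\xi^+(0,0,\omega)$. Strict hyperbolicity alone (distinct real roots) does not force the slowness surface to be strictly convex; what does the job here is that $P$ is a second-order \emph{differential} operator with $q_2(t,x,\xi)\lesssim-|\xi|^2$. Then $q_1$ is linear and $Q:=q_1^2/4-q_2$ is a positive definite quadratic form in $\xi$, so $\mu^+=-q_1/2+\sqrt{Q}$ is a convex function whose Hessian $\mu^+_{\xi\xi}=\mathrm{Hess}\,\sqrt{Q}$ has rank exactly $n-1$ (its kernel is the radial direction). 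This simultaneously gives the immersion property and, by monotonicity of the gradient of a strictly convex function, the injectivity of $\omega\mapsto\mu^+_\xi(0,0,\omega)$ on $S^{n-1}$. With that computation made explicit, your proof goes through. (For $n=1$ the sphere $S^0$ is disconnected and $\mathcal C\setminus\mathscr O$ has two branches, so the ``connectedness'' step should be phrased componentwise; this is cosmetic.)
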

\footnotetext{The fact that $\mathcal C \setminus \mathscr O$ has a
  boundary at $t=T$ plays no role here.}

In view of Lemma~\eqref{332}, we set
\[
  I_\cl^\mu([0,T]\times\R^n,\mathcal C) =
  I_\cl^\mu([0,T]\times\R^n,\Lambda^+) +
  I_\cl^\mu([0,T]\times\R^n,\Lambda^-)
\]
for $\mu\in\C$ and say that distributions in
$I_\cl^\mu([0,T]\times\R^n,\mathcal C)$ are classically conormal, of
order $\mu$, with respect to $\mathcal C$. Notice that
$I_\cl^\mu([0,T]\times\R^n,\Lambda^+) \cap
I_\cl^\mu([0,T]\times\R^n,\Lambda^-) =
\cC_{\textup{b}}^\infty([0,T]\times\R^n)$.

\begin{rem}
It holds 
\[
  I_\cl^\mu([0,T]\times\R^n,\mathcal C) \subset \cC^\infty([0,T];\cS'(\R^n)).
\]
More precisely, for $u \in I_\cl^\mu([0,T]\times\R^n,\mathcal C)$, we
have that $u\bigr|_{\{t\}\times\R^n}\in I_\cl^{\mu+1/4}(\R^n,\mathcal
C \cap (\{t\}\times \R^n))$ for $0<t\leq T$, while
$u\bigr|_{\{0\}\times\R^n}\in I_\cl^{\mu+1/4}(\R^n,\{0\})$.
\end{rem}

We now consider the inhomogeneous Cauchy problem
\begin{equation}\label{tat}
\left\{ \enspace
\begin{aligned}
  & Pu = f(t,x), \qquad (t,x)\in (0,T)\times\R^n, \\
  & u\bigr|_{t=0} = g_0(x), \quad D_t u\bigr|_{t=0} = g_1(x).
\end{aligned}    
\right.
\end{equation}

We first show that the solution $u$ is conormal with respect to $\mathcal C$
under the condition that the initial data $(g_0,g_1)$ is conormal with
respect to the origin and that the right-hand side $f$ is also conormal with
respect to $\mathcal C$.

\begin{thm}\label{hasd}
Let $\mu\in \C$, $g_0\in I_\cl^{\mu+1/4}(\R^n,\{0\})$, $g_1\in
I_\cl^{\mu+5/4}(\R^n,\{0\})$, and $f\in
I_\cl^{\mu+1}([0,T]\times\R^n,\mathcal C)$. Then the unique solution
$u$ to\/ \textup{Eq.~\eqref{tat}} belongs to
$I_\cl^\mu([0,T]\times\R^n,\mathcal C)$.
\end{thm}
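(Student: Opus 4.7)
The plan is to invoke Theorem~\ref{main} with a setup analogous to that in Section~\ref{prop_conormal}, adapted for the fact that the initial data is conormal at the single point $\{0\}\subset\R^n$ rather than at a full hypersurface, and that the characteristic set $\mathcal C$ decomposes into the two components corresponding to the Lagrangians $\Lambda^\pm$. Concretely, I would set $\bL u=(Pu,\,u|_{t=0},\,D_t u|_{t=0})$, $\bE^j = I_\cl^{\mu-j}([0,T]\times\R^n,\mathcal C)$, and
\[
  \widetilde\bE^j = I_\cl^{\mu+1-j}([0,T]\times\R^n,\mathcal C) \oplus I_\cl^{\mu+1/4-j}(\R^n,\{0\}) \oplus I_\cl^{\mu+5/4-j}(\R^n,\{0\}).
\]
Because $\Lambda^+\cap\Lambda^-=\emptyset$, every $u\in\bE^j$ decomposes uniquely modulo $\cC_{\textup{b}}^\infty$ as $u=u_++u_-$ with $u_\pm\in I_\cl^{\mu-j}([0,T]\times\R^n,\Lambda^\pm)$, and the principal symbol map $\bs^j(u)=(\sigma_\psi^{\mu-j}(u_+),\sigma_\psi^{\mu-j}(u_-))$ takes values in $\bF^j=S^{(\bar\mu-j)}(\Lambda^+)\oplus S^{(\bar\mu-j)}(\Lambda^-)$; $\widetilde\bs^{\,j}$ is defined analogously on each summand of $\widetilde\bE^j$, using $\dot N^*\{0\}\cong\dot\R^n$ for the two Cauchy-data slots.

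Next I would identify the transport operator $\bT^j$. Since $\sigma_\psi^2(P)$ vanishes on $\Lambda^\pm$, multiplication by $P$ acts on each branch as the transport operator $a_\pm\mapsto T_{\Lambda^\pm}^P a_\pm$ of order $1$. Restriction of a conormal distribution $u_\pm\in I_\cl^{\mu-j}(\Lambda^\pm)$ to $\{t=0\}$ produces, up to a factor $(2\pi)^{-1}$, the symbol $a_\pm^0(\xi):=\sigma_\psi^{\mu-j}(u_\pm)(0,0,\mu^\pm(0,0,\xi),\xi)$ on $\dot\R^n$, while the trace of $D_t u_\pm$ additionally multiplies this by $\mu^\pm(0,0,\xi)$, since $\tau=\mu^\pm(0,0,\xi)$ on $\Lambda^\pm\cap\{t=0\}$. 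Hence
\[
  \bT^j(a_+,a_-) = \bigl(T_{\Lambda^+}^P a_+,\; T_{\Lambda^-}^P a_-,\; a_+^0+a_-^0,\; \mu^+(0,0,\xi)\,a_+^0 + \mu^-(0,0,\xi)\,a_-^0\bigr),
\]
and the $2\times 2$ matrix taking $(a_+^0,a_-^0)$ to the last two components is a Vandermonde matrix with non-vanishing determinant $\mu^-(0,0,\xi)-\mu^+(0,0,\xi)$ by strict hyperbolicity.

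To verify condition~(II), given data $(b_+,b_-,c_0,c_1)\in\widetilde\bF^j$ I would first invert this Vandermonde matrix to prescribe boundary values $a_\pm^0$ on $\dot\R^n$, and then solve the transport equations $T_{\Lambda^\pm}^P a_\pm=b_\pm$ with these initial conditions along the bicharacteristic flow emanating from $t=0$; these are first-order scalar ODEs with smooth coefficients, uniquely solvable on $\Lambda^\pm$ up to $t=T$ by the method of characteristics. Exactness of the rows~(I) is standard for the filtration of classical conormal distributions by order, asymptotic completeness~(III) follows from the scheme of the appendix, and condition~(IV) is just the $\cC^\infty$ well-posedness of the strictly hyperbolic Cauchy problem~\eqref{tat}.

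The main obstacle will be the geometric degeneracy of $\Lambda^\pm$ at the vertex $\mathscr O$: $\Lambda^\pm$ are Lagrangian manifolds with boundary at $\{t=0\}$ whose base projection collapses to the non-smooth point of $\mathcal C$, and one has to verify that the symbol calculus, the transport operator $T_{\Lambda^\pm}^P$, and the restriction map to $t=0$ all extend consistently up to this boundary, so that the exact sequences and the bijectivity of $\bT^j$ hold uniformly in $j$. Once these compatibility statements are secured, Theorem~\ref{main} produces the desired solution $u\in I_\cl^\mu([0,T]\times\R^n,\mathcal C)$.
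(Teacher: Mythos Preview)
Your proposal is correct and follows essentially the same route as the paper: the same choice of $\bL$, the same filtrations $\bE^j$, $\widetilde\bE^j$, $\bF^j$, $\widetilde\bF^j$, and the same transport operator $\bT^j=(T_{\Lambda^+}^P\oplus T_{\Lambda^-}^P,\gamma_0,\gamma_1)$ with the Vandermonde initial-value map. Your worry about the vertex $\mathscr O$ is sidestepped in the paper exactly as you suggest, by working directly with the smooth Lagrangians-with-boundary $\Lambda^\pm$ (parametrized by the phase functions $\phi^\pm$ solving \eqref{eikonal}) rather than with the singular cone $\mathcal C$ itself.
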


\begin{proof}
We apply the abstract scheme with $\bL u = \bigl(Pu, u\bigr|_{t=0},
D_t u\bigr|_{t=0}\bigr)$ and
\begin{align*}
  \bE_j &= I_\cl^{\mu-j}([0,T]\times\R^n,\mathcal C), \\
  \widetilde \bE_j &= I_\cl^{\mu-j+1}([0,T]\times\R^n,\mathcal C) \oplus
  I_\cl^{\mu-j+1/4}(\R^n,\{0\}) \oplus
  I_\cl^{\mu-j+5/4}(\R^n,\{0\}), \\
  \bF_j &= S^{(\bar\mu-j)}(\Lambda^+\sqcup \Lambda^-), \\
  \widetilde \bF_j &= S^{(\bar\mu-j+1)}(\Lambda^+\sqcup
  \Lambda^-) \oplus S^{(\bar\mu-j)}(\dot\R^n) \oplus
  S^{(\bar\mu-j+1)}(\dot\R^n),
\end{align*}
where $\bar \mu= \mu - \frac{n-1}4$. The maps $\bs^j\colon \bE^j \to
\bF^j$ and $\widetilde\bs^j\colon \widetilde \bE^j\to \bF^j$ are the
principal symbol maps, while the operator $\bT^j \colon \bF^j \to
\widetilde\bF^j$ is given as $\bT^j=\left(T_{\Lambda^+}^P\oplus
T_{\Lambda^-}^P,\gamma_0,\gamma_1\right)$, where
\begin{equation}\label{dert}
\left\{ \enspace
\begin{aligned}
&\gamma_0\colon \bigl(a_{(\bar\mu-j)}^+, a_{(\bar\mu-j)}^-\bigr)
  \mapsto a_{(\bar\mu-j)}^+\bigr|_{t=0} +
  a_{(\bar\mu-j)}^-\bigr|_{t=0}, \\
  &\gamma_1\colon \bigl(a_{(\bar\mu-j)}^+, a_{(\bar\mu-j)}^-\bigr)
  \mapsto \mu^+ a_{(\bar\mu-j)}^+\bigr|_{t=0} + \mu^-
  a_{(\bar\mu-j)}^-\bigr|_{t=0}.
\end{aligned}
\right.
\end{equation}
As before, we readily establish that properties (I) through (IV) hold.
\end{proof}

\begin{rem}
The solution $u$ to \eqref{tat} is of the form
\begin{equation}\label{tpp0}
  u(t,x) = \int_{\R^n} \left( \ee^{\ii \phi^+(t,x,\xi)} a^+(t,\xi) +
  \ee^{\ii \phi^-(t,x,\xi)} a^-(t,\xi) \right)\dbar \xi
  \mod{\cC_{\textup{b}}^\infty}
\end{equation}
for suitable $a^\pm \in S_\cl^{\bar\mu}([0,T]\times\R^n)$.
\end{rem}

As announced at the beginning of this section, we now strengthen the
statement of Theorem~\ref{hasd}. To this end, we need to discuss the
transmission property, see Appendix~\ref{trprop}. From now on, the space
dimension $n$ is odd.

\begin{prop}
Let $\bar\mu=\mu-(n-1)/4\in\Z$ and $n$ be odd. Then $u=u(t,x)$ as
given in \eqref{tpp0} possesses the two-sided transmission property
with respect to $\mathcal C$ if and only if
\begin{equation}\label{tpp1}
  a_{(\bar\mu-j)}^-(t,-\xi) = (-1)^{\bar\mu-j} a_{(\bar\mu-j)}^+(x,\xi),
  \quad j\in \N_0.
\end{equation}
\end{prop}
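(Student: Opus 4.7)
The plan is to reduce the two-summand oscillatory integral in \eqref{tpp0} to a single-phase representation and then invoke the parity criterion for the transmission property recalled in Appendix~\ref{trprop}.

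First, I would establish the phase-function identity
\[
  \phi^-(t,x,-\xi) = -\,\phi^+(t,x,\xi).
\]
This follows from the characteristic symmetry \eqref{srt} and uniqueness for the eikonal equation \eqref{eikonal}: the function $\psi(t,x,\xi):=-\phi^+(t,x,-\xi)$ satisfies $\psi\bigr|_{t=0}=x\cdot\xi$, and $\partial_t\psi=-\partial_t\phi^+(t,x,-\xi)=-\mu^+(t,x,\nabla\phi^+(t,x,-\xi))=\mu^-(t,x,\nabla\psi)$ by \eqref{srt}, so $\psi=\phi^-$ by uniqueness. Applying the substitution $\xi\mapsto-\xi$ to the $\phi^-$-summand of \eqref{tpp0} then rewrites $u$ modulo $\cC_{\textup{b}}^\infty$ as the single-phase integral
\[
  \int_{\R^n}\bigl(\ee^{\ii\phi^+(t,x,\xi)}a^+(t,\xi)+\ee^{-\ii\phi^+(t,x,\xi)}a^-(t,-\xi)\bigr)\dbar\xi.
\]

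Second, I would work locally near an arbitrary point of $\mathcal{C}\setminus\mathscr{O}$ and pick coordinates $(y',y_n)$ in which $\mathcal{C}=\{y_n=0\}$. Stationary phase in the $\xi'$-variables reduces each of the two summands to the canonical form $\int_\R\ee^{\ii y_n\xi_n}A^\pm(y,\xi_n)\dbar\xi_n$ with classical $A^\pm\in S_\cl^{\bar\mu}$ in $\xi_n$, the homogeneous components $A^\pm_{(\bar\mu-j)}(y,\xi_n)$ being obtained from $a^\pm_{(\bar\mu-j)}(t,\xi)$ by this reduction. No Maslov factors arise because the Lagrangian is flat in these coordinates. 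Combining the two summands, the full symbol of $u$ at $y$ reads $A(y,\xi_n)=A^+(y,\xi_n)$ for $\xi_n>0$ and $A(y,\xi_n)=A^-(y,-\xi_n)$ for $\xi_n<0$.

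Third, since $\bar\mu\in\Z$, the transmission criterion from Appendix~\ref{trprop} asserts that $u$ enjoys the two-sided transmission property at $\{y_n=0\}$ if and only if every homogeneous term obeys the parity relation $A_{(\bar\mu-j)}(y,-\xi_n)=(-1)^{\bar\mu-j}A_{(\bar\mu-j)}(y,\xi_n)$ for $\xi_n>0$. Substituting the identifications of the second step, this reads $A^-_{(\bar\mu-j)}(y,\xi_n)=(-1)^{\bar\mu-j}A^+_{(\bar\mu-j)}(y,\xi_n)$, which is precisely the pull-back of \eqref{tpp1} under the symbol reduction. The main obstacle will be the bookkeeping in the second step: ensuring that the fibre-antipodal map $\xi\mapsto-\xi$ on $\dot N^*\mathcal{C}$ corresponds exactly to $\xi_n\mapsto-\xi_n$ in the normal form, that the stationary-phase reduction identifies the symbols on $\Lambda^+$ and $\Lambda^-$ without spurious sign or phase, and that the oddness of $n$ enters only through $\bar\mu\in\Z$, so that the parity condition is well posed. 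These checks are routine but must be carried out carefully in order to secure the exact form \eqref{tpp1}.
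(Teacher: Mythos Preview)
Your approach is essentially the same as the paper's: reduce the two-summand oscillatory integral \eqref{tpp0} to the canonical one-dimensional form of Appendix~\ref{trprop} via stationary phase in the tangential frequency variables, then read off the parity criterion and translate it back to \eqref{tpp1}. Your explicit derivation of the identity $\phi^-(t,x,-\xi)=-\phi^+(t,x,\xi)$ from \eqref{srt} and the eikonal equation is a clean organizing step that the paper leaves implicit; note, however, that the paper records the reduced symbol order as $\bar\mu+(n-1)/2$ rather than $\bar\mu$, so the extra parity factor $(-1)^{(n-1)/2}$ (an integer power since $n$ is odd) must be absorbed in the bookkeeping you already flagged as the main point to check.
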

\begin{proof}
This follows by writing, for $0<t\leq T$, 
\[
  u(t,x) = \int_{-\infty}^\infty \ee^{\ii(t-\psi(x))\tau} b(x,\tau)\,\dbar
  \tau \mod \cC^\infty
\]
for a suitable\footnote{$b=b(x,\tau)$ grows like $|x|^{-(n-1)/2}$ as
$x\to 0$, but this is irrelevant here.} $b\in
S_\cl^{\bar\mu+(n-1)/2}((\R^n\setminus\{0\})\times\R)$, where
$t-\psi(x)=0$ is a defining equation for $\mathcal C\setminus
\mathscr O$, and, using the stationary phase method, by expressing
$b_{(\bar\mu-j +(n-1)/2)}(x,\tau)$ for $j\in\N_0$ in terms of
$a_{(\bar\mu-k)}^+(t,\xi)$ for $k\leq j$ when $\tau>0$ and in terms of
$a_{(\bar\mu-k)}^-(t,\xi)$ for $k\leq j$ when $\tau<0$. The condition
\[
  b_{(\bar\mu-j +(n-1)/2)}(x,-\tau)= (-1)^{(\bar\mu-j +(n-1)/2)}
  b_{(\bar\mu-j +(n-1)/2)}(x,-\tau), \quad j\in\N_0,
\]
then translates into \eqref{tpp1}. See also \cite{Bou1979,HP1979}.
\end{proof}

For the proof of the next theorem, we need to work out suitable
subspaces of the function spaces $\bE^j$, $\widetilde \bE^j$, $\bF^j$,
and $\widetilde\bF^j$ used in the proof of Theorem~\ref{hasd}. For
$\bar\mu\in\Z$ and $n$ odd, we define the space
$I_{\cl,\tr}^\mu([0,T]\times\R^n,\mathcal C)$ to consist of all $u\in
I_\cl^\mu([0,T]\times\R^n,\mathcal C)$ that possess the two-sided
transmission property with respect to the future light cone $\mathcal
C$. Further we introduce
\[
  S_\tr^{(\bar\mu)}(\Lambda^+\sqcup \Lambda^-) =
  \{(a_{(\bar\mu)}^+,a_{(\bar\mu)}^-)\in S^{(\bar\mu)}(\Lambda^+\sqcup
  \Lambda^-) \mid a_{(\bar\mu)}^-(t,-\xi) = (-1)^{\bar\mu}
  a_{(\bar\mu)}^+(t,\xi)\}.
\]
We also need to impose certain symmetry conditions on the initial data
$(g_0,g_1)$. For $\bar\mu\in\Z$, let
\[ 
  \tilde S^{(\bar\mu)}(\dot\R^n) = \left\{b_{(\bar\mu)}\in
  S^{(\bar\mu)}(\dot\R^n) \mid b_{(\bar\mu)}(-\xi) =
  (-1)^{\bar\mu}b_{(\bar\mu)}(\xi) \right\}
\]
and let $\tilde I_\cl^{\mu+1/4}(\R^n,\{0\})$ consist of all $g\in
I_\cl^{\mu+1/4}(\R^n,\{0\})$ such that the Fourier transform
$\widehat{g} \in S_\cl^{\bar\mu}(\R^n)$ admits an asymptotic expansion
$\widehat g(\xi) \sim \sum_{j\geq 0} \chi(\xi) b_{(\bar\mu-j)}(\xi)$
with $b_{(\bar\mu-j)}\in \tilde S^{(\bar\mu-j)}(\dot\R^n)$ for all
$j$.

\begin{lem}
Let $\bar\mu\in\Z$ and $n$ be odd. Then, for $u \in
I_{\cl,\tr}^\mu([0,T]\times\R^n,\mathcal C)$, 
\[
  u\bigr|_{t=0} \in \tilde I_\cl^{\mu+1/4}(\R^n,\{0\}), \quad 
  D_t u\bigr|_{t=0} \in \tilde I_\cl^{\mu+5/4}(\R^n,\{0\}).
\]
\end{lem}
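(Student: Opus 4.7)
The plan is to read off the boundary traces of $u$ directly from the principal-symbol maps $\gamma_0$ and $\gamma_1$ of \eqref{dert} that already appeared in the proof of Theorem~\ref{hasd}, and then to verify the required parity at each homogeneity level using the transmission condition \eqref{tpp1} combined with the sign symmetry \eqref{srt}.

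Concretely, I would first write $u = u^+ + u^-$ modulo $\cC_{\textup{b}}^\infty$ with $u^\pm(t,x) = \int \ee^{\ii\phi^\pm(t,x,\xi)}\,a^\pm(t,\xi)\,\dbar\xi$ as in \eqref{tpp0}; by the preceding proposition, the homogeneous components of $a^\pm \in S_\cl^{\bar\mu}([0,T]\times\R^n)$ satisfy \eqref{tpp1}, equivalently $a^+_{(\bar\mu-j)}(t,-\xi) = (-1)^{\bar\mu-j}\,a^-_{(\bar\mu-j)}(t,\xi)$. The initial condition $\phi^\pm|_{t=0} = x\cdot\xi$ from \eqref{eikonal} then turns the representation at $t=0$ into a bare Fourier integral, so that the Fourier transform of $u|_{t=0}$ is, modulo $\cS$, the classical symbol $a^+(0,\xi) + a^-(0,\xi)$ whose homogeneous components are exactly $b^0_{(\bar\mu-j)}(\xi) := \gamma_0\bigl(a^+_{(\bar\mu-j)}(0,\cdot),\,a^-_{(\bar\mu-j)}(0,\cdot)\bigr)$. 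Plugging the transmission relation into $b^0_{(\bar\mu-j)}(-\xi)$ yields in one line that $b^0_{(\bar\mu-j)}(-\xi) = (-1)^{\bar\mu-j}\,b^0_{(\bar\mu-j)}(\xi)$, which is the defining symmetry of $\tilde S^{(\bar\mu-j)}(\dot\R^n)$; hence $u|_{t=0} \in \tilde I_\cl^{\mu+1/4}(\R^n,\{0\})$.

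For $D_t u|_{t=0}$ I would differentiate \eqref{tpp0} under the oscillatory integral sign; by \eqref{eikonal} one has $\partial_t\phi^\pm|_{t=0} = \mu^\pm(0,x,\xi)$, and a Taylor expansion of $\mu^\pm(0,x,\xi)$ about $x=0$ together with integration by parts in $\xi$ (using $x^\alpha \ee^{\ii x\cdot\xi} = D_\xi^\alpha \ee^{\ii x\cdot\xi}$) turns the Fourier transform of $D_t u|_{t=0}$ into a classical symbol of order $\bar\mu + 1$ whose leading-order homogeneous component at degree $\bar\mu + 1 - j$ is exactly $b^1_{(\bar\mu-j)}(\xi) := \gamma_1\bigl(a^+_{(\bar\mu-j)}(0,\cdot),\,a^-_{(\bar\mu-j)}(0,\cdot)\bigr) = \mu^+(0,0,\xi)\,a^+_{(\bar\mu-j)}(0,\xi) + \mu^-(0,0,\xi)\,a^-_{(\bar\mu-j)}(0,\xi)$. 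Combining \eqref{tpp1} with \eqref{srt} evaluated at $(t,x)=(0,0)$, i.e.\ $\mu^\pm(0,0,-\xi) = -\mu^\mp(0,0,\xi)$, produces by the same one-line check the extra sign $b^1_{(\bar\mu-j)}(-\xi) = (-1)^{\bar\mu+1-j}\,b^1_{(\bar\mu-j)}(\xi)$, which is the symmetry required at degree $\bar\mu + 1 - j$. The subprincipal contributions coming from $D_t a^\pm$ and from the $x$-Taylor expansion of $\mu^\pm$ are analysed in the same way, since $(\partial_x^\alpha\mu^\pm)(0,0,-\xi) = -(\partial_x^\alpha\mu^\mp)(0,0,\xi)$ holds for every $\alpha$ and each $D_\xi$ acting on the amplitude simultaneously lowers the homogeneity by one and flips parity.

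The only non-routine point is this last bookkeeping: the $x$-dependence of $\mu^\pm(0,x,\xi)$ in the amplitude forces a Taylor/integration-by-parts argument, and one has to verify that the reshuffling by homogeneity degree preserves the transmission-type parity at every order. Once that verification is organised, both claims reduce to routine classical symbol calculus on $\R^n$ starting from the explicit formulas for $\gamma_0$ and $\gamma_1$ in \eqref{dert}.
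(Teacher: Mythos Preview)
Your proposal is correct and is precisely the ``direct verification that employs \eqref{dert}'' that the paper alludes to in its one-line proof: you spell out the oscillatory-integral traces at $t=0$, read off the homogeneous components via $\gamma_0$ and $\gamma_1$, and check the required parity from \eqref{tpp1} together with \eqref{srt}. The one point you flag as non-routine---the $x$-dependence of $\mu^\pm(0,x,\xi)$ in the $D_t$-trace, handled by Taylor expansion and integration by parts with the observation that each $\partial_x^\alpha\mu^\pm$ inherits the sign-swap \eqref{srt} and each $D_\xi$ simultaneously lowers homogeneity and flips parity---is exactly the bookkeeping the paper suppresses, and your treatment of it is sound.
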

\begin{proof}
A direct verification that employs \eqref{dert}.
\end{proof}

\begin{thm}\label{additional}
Let $\bar \mu\in \Z$ and $n$ be odd. Suppose that $g_0\in\tilde
I_\cl^{\mu+1/4}(\R^n,\{0\})$, $g_1\in\tilde
I_\cl^{\mu+5/4}(\R^n,\{0\})$, and $f\in
I_{\cl,\tr}^{\mu+1}([0,T]\times\R^n,\mathcal C)$. Then the solution
$u$ as described in the previous theorem belongs to the space
$I_{\cl,\tr}^{\,\mu}([0,T]\times\R^n,\mathcal C)$, i.e., it possesses the
two-sided transmission property with respect to the future light cone
$\mathcal C$.
\end{thm}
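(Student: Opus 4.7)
The plan is to run the abstract scheme a second time, now on subspaces that encode the transmission property and the corresponding symmetry for the initial data. Specifically, set $\bE_\tr^j = I_{\cl,\tr}^{\mu-j}([0,T]\times\R^n,\mathcal C)$,
\[
  \widetilde\bE_\tr^j = I_{\cl,\tr}^{\mu-j+1}([0,T]\times\R^n,\mathcal C) \oplus \tilde I_\cl^{\mu-j+1/4}(\R^n,\{0\}) \oplus \tilde I_\cl^{\mu-j+5/4}(\R^n,\{0\}),
\]
$\bF_\tr^j = S_\tr^{(\bar\mu-j)}(\Lambda^+\sqcup \Lambda^-)$, and $\widetilde\bF_\tr^j = S_\tr^{(\bar\mu-j+1)}(\Lambda^+\sqcup \Lambda^-) \oplus \tilde S^{(\bar\mu-j)}(\dot\R^n) \oplus \tilde S^{(\bar\mu-j+1)}(\dot\R^n)$, and keep $\bL$, $\bs^j$, $\widetilde\bs^{\,j}$, $\bT^j$ exactly as in the proof of Theorem~\ref{hasd}. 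The preceding lemma guarantees that $\bL$ respects these subspaces, and exactness of the rows (condition~(I)) is inherited from the unrestricted scheme, using that symmetric symbols in $S_\tr^{(\bar\mu-j)}$ can be realized as the principal symbol of a suitably symmetric oscillatory integral.

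The crux is condition~(II), namely the bijectivity of $\bT^j = \bigl(T^P_{\Lambda^+}\oplus T^P_{\Lambda^-},\gamma_0,\gamma_1\bigr)$ after restriction. For the initial-data maps, a direct computation using \eqref{srt} and the symmetry $a^-(t,-\xi) = (-1)^{\bar\mu-j}a^+(t,\xi)$ gives
\[
  \gamma_0(a^+,a^-)(-\xi) = (-1)^{\bar\mu-j}\gamma_0(a^+,a^-)(\xi), \qquad
  \gamma_1(a^+,a^-)(-\xi) = (-1)^{\bar\mu-j+1}\gamma_1(a^+,a^-)(\xi),
\]
while invertibility of the Vandermonde matrix $\bigl(\begin{smallmatrix}1&1\\ \mu^+&\mu^-\end{smallmatrix}\bigr)$ ensures that a symmetric pair $(h_0,h_1)$ admits a unique symmetric pre-image. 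For the transport part, the key algebraic identity is $p(t,x,-\tau,-\xi)=p(t,x,\tau,\xi)$, which follows from $\mu^\pm(t,x,-\xi)=-\mu^\mp(t,x,\xi)$; the induced involution $\iota\colon \Lambda^+\to\Lambda^-$, $(t,x,\tau,\xi)\mapsto(t,x,-\tau,-\xi)$, then intertwines the two restrictions of the Hamilton vector field $H_p$, so the Lie-derivative parts of $T^P_{\Lambda^\pm}$ intertwine. A parallel check with the full symbol of $P$ pulled back along $\iota$ handles the subprincipal correction, so the symmetry $a^-(t,-\xi)=(-1)^{\bar\mu-j}a^+(t,\xi)$ is propagated by the transport, and the two ODEs decouple into $\iota$-related problems whose simultaneous solution automatically forms a symmetric pair in $\bF_\tr^j$.

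Conditions~(III) and~(IV) are then essentially free: asymptotic completeness descends to the restricted spaces because the symmetry relation survives termwise asymptotic summation (the construction in the appendix produces sums of already symmetric terms), and the residual classes $\bE_\tr^\infty$, $\widetilde\bE_\tr^\infty$ coincide with $\cC_{\textup{b}}^\infty$, so~(IV) reduces to the $\cC^\infty$-well-posedness already invoked in Theorem~\ref{hasd}. Theorem~\ref{main} thus produces a solution in $\bE_\tr^0 = I_{\cl,\tr}^{\,\mu}([0,T]\times\R^n,\mathcal C)$, which by the uniqueness in Theorem~\ref{hasd} must coincide with the solution $u$ produced there. I expect the main obstacle to be the symbolic symmetry check for $T^P_{\Lambda^\pm}$, since it requires tracking how the subprincipal part of $P$ transforms under $(\tau,\xi)\mapsto(-\tau,-\xi)$; this is transparent for the wave operator but demands a careful computation in the general strictly hyperbolic setting considered here.
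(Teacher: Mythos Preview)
Your proposal is correct and follows essentially the same approach as the paper: you restrict the scheme of Theorem~\ref{hasd} to the subspaces carrying the transmission/symmetry conditions and verify (I)--(IV) there, the main point being that $\bT^j$ remains bijective on these subspaces. Your discussion is in fact more explicit than the paper's own proof, which simply invokes ``the symmetry properties of the transport operators $T_{\Lambda^+}^P$, $T_{\Lambda^-}^P$'' without spelling out the involution argument you give; your identification of the subprincipal check as the one place requiring care is apt, and for the differential operator $P$ in \eqref{ppP} it goes through because the full symbol is a polynomial in $(\tau,\xi)$ with the right parities.
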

\begin{proof}
We change the setup in the proof of the previous theorem as follows:
\begin{align*}
  \bE_j &= I_{\cl,\tr}^{\mu-j}([0,T]\times\R^n,\mathcal C), \\
  \widetilde \bE_j &= I_{\cl,\tr}^{\mu-j+1}([0,T]\times\R^n,\mathcal C) \oplus
  \tilde I_\cl^{\mu+1/4}(\R^n,\{0\}) \oplus
  \tilde I_\cl^{\mu+5/4}(\R^n,\{0\}), \\
  \bF_j &= S_\tr^{(\bar\mu-j)}(\Lambda^+\sqcup \Lambda^-), \\
  \widetilde \bF_j &= S_\tr^{(\bar\mu-j+1)}(\Lambda^+\sqcup
  \Lambda^-) \oplus \tilde S^{(\bar\mu-j)}(\dot\R^n) \oplus
  \tilde S^{(\bar\mu-j+1)}(\dot\R^n).
\end{align*}
The operators $\bs^j$, $\widetilde\bs^j$, and $\bT^j$ are as before,
but acting on the indicated subspaces. Now, given $b_{(\bar\mu-j)} \in
\tilde S^{(\bar\mu-j)}(\dot\R^n)$, $ c_{(\bar\mu-j+1)} \in \tilde
S^{(\bar\mu-j+1)}(\dot\R^n)$ and solving the linear system
\[
  \gamma_0 \bigl( a_{(\bar\mu-j)}^+\bigr|_{t=0},
  a_{(\bar\mu-j)}^-\bigr|_{t=0} \bigr) = b_{(\bar\mu-j)}, \quad
  \gamma_1 \bigl( a_{(\bar\mu-j)}^+\bigr|_{t=0},
  a_{(\bar\mu-j)}^-\bigr|_{t=0} \bigr) = c_{(\bar\mu-j+1)}
\]
for $a_{(\bar\mu-j)}^+\bigr|_{t=0}$, $a_{(\bar\mu-j)}^-\bigr|_{t=0}$,
we find that $a_{(\bar\mu-j)}^+\bigr|_{t=0},
a_{(\bar\mu-j)}^-\bigr|_{t=0} \in \tilde S^{(\bar\mu-j)}(\dot\R^n)$
and
\begin{equation}\label{ssz}
  a_{(\bar\mu-j)}^-\bigr|_{t=0}(-\xi) = (-1)^{\bar\mu-j
  }a_{(\bar\mu-j)}^+\bigr|_{t=0}(\xi).
\end{equation}
Consequently, in view of the symmetry properties of the transport
operators $T_{\Lambda^+}^P$, $T_{\Lambda^-}^P$, the relation in
\eqref{ssz}~continues to hold also for $0<t\leq T$ and we readily
obtain that the operator
\[
  \bT^j \colon S_\tr^{(\bar\mu-j)}(\Lambda^+\sqcup\Lambda^-) \to
  S_\tr^{(\bar\mu-j+1)}(\Lambda^+\sqcup\Lambda^-) \oplus \tilde
  S^{(\bar\mu-j)}(\dot\R^n) \oplus \tilde
  S^{(\bar\mu-j+1)}(\dot\R^n)
\]
is an isomorphism. Again, properties (I) through (IV) are satisfied.
\end{proof}

\begin{ex}
Let $n$ be odd. Then the Green's function of the operator $P$ in
\eqref{ppP} is found when $g_0 = 0$, $g_1=\ii \delta(x) \in \tilde
I_\cl^{n/4}(\R^n,\{0\})$, and $f=0$. We have $\bar\mu=-1$, and all the
conditions of Theorem~\ref{additional} are fulfilled. Consequently, we
recover the known result that the forward Green's function of $P$ is
smooth from both sides up the future light cone.
\end{ex}


\appendix


\section{}


\subsection{Asymptotic completeness}

Here, we report an abstract scheme for establishing asymptotic
completeness. It is in the spirit of this article. Details can be
found in \cite[Prop.~1.1.17]{Sch1998}.

As before, let $\bigl\{\bE^j\bigr\}_{j\in\N_0}$ be a sequence of
Fr\'echet spaces such that
\[
  \bE^0 \supseteq \bE^1 \supseteq \bE^2 \supseteq \dotsc \supseteq
  \bE^\infty,
\]
where $\bE^\infty =\bigcap_j \bE^j$. Suppose that there is family
$\{\bbchi(c)\mid c\geq1\}$ of linear operators such that
\begin{itemize}
\item $\bbchi(c) \in \bigcap_{j\in\N_0} \mathcal L(\bE^j,\bE^j)$,
\item $1 - \bbchi(c) \colon \bE^0 \to \bE^\infty$,
\item $\bbchi(c) u \to 0$ as $c\to\infty$ in $\bE^j$ for all $u\in
  \bE^{j+1}$.
\end{itemize}

\begin{thm}
Under the conditions above, for each sequence $\{u_j\}\subset \bE^0$
with $u_j\in\bE^j$ for all~$j$, there is a sequence $\{c_j\}\subset
[1,\infty)$ \/\textup{(}with $c_j\to\infty$ as $j\to \infty$
  sufficiently fast depending on $\{u_j\}$\textup{)} such that, for
  any $J\in\N_0$, the series
\[
  \textup{$\sum\nolimits_{j\geq J} \bbchi(c_j)u_j$ converges in
    $\bE^J$.}
\]
\end{thm}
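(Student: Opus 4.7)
The plan is to exploit the Fréchet topology of each $\bE^J$ and make a diagonal choice of the parameters $c_j$ so that the terms $\bbchi(c_j) u_j$ become summable in every seminorm of every $\bE^J$. This is essentially a Borel-type summation argument.

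First, I would equip each Fréchet space $\bE^J$ with a countable family of seminorms $\{\|\cdot\|_{J,k}\}_{k\in\N_0}$ generating its topology. The key observation is that for any $j>J$, since $u_j\in \bE^j\subseteq \bE^{J+1}$, the third hypothesis (applied at level $J$) gives $\bbchi(c)u_j\to 0$ in $\bE^J$ as $c\to \infty$; equivalently, $\|\bbchi(c)u_j\|_{J,k}\to 0$ for every $k\in\N_0$. Meanwhile, the first hypothesis guarantees $\bbchi(c)u_j\in \bE^j$, so all the partial sums I will form live in the appropriate spaces.

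Next, I would construct $\{c_j\}$ inductively. Setting $c_0=1$, for each $j\geq 1$ I would impose the finitely many conditions
\[
  \|\bbchi(c_j)u_j\|_{J,k}\leq 2^{-j} \quad \text{for all $J,k$ with $J<j$ and $k\leq j$,}
\]
together with $c_j\geq j$. Since each of the finitely many quantities on the left tends to $0$ as $c\to\infty$, a large enough $c_j$ satisfies all of them simultaneously, and the constraint $c_j\geq j$ ensures $c_j\to \infty$.

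Finally, I would verify convergence. Fix $J\in\N_0$ and $k\in\N_0$, and let $N=\max(J+1,k)$. For every $j\geq N$ one has $J<j$ and $k\leq j$, hence $\|\bbchi(c_j)u_j\|_{J,k}\leq 2^{-j}$, so $\sum_{j\geq N}\|\bbchi(c_j)u_j\|_{J,k}<\infty$. Since this holds for every seminorm $\|\cdot\|_{J,k}$, the partial sums of $\sum_{j\geq J}\bbchi(c_j)u_j$ form a Cauchy sequence in the Fréchet space $\bE^J$ and therefore converge. The mild obstacle—really the only point needing care—is the bookkeeping ensuring that at each step $j$ only finitely many seminorm conditions must be met, which is why the constraints are truncated to $J<j$ and $k\leq j$; the standard diagonal argument then delivers a single sequence $\{c_j\}$ that works uniformly for all $J$.
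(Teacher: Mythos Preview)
Your argument is correct and is precisely the standard Borel-type diagonal selection that underlies this result; the paper does not supply its own proof but merely cites \cite[Prop.~1.1.17]{Sch1998}, whose proof proceeds along the same lines as yours. One cosmetic remark: when you verify convergence you only need to note that the finitely many terms with $J\le j<N$ lie in $\bE^J$ (since $\bbchi(c_j)u_j\in\bE^j\subseteq\bE^J$) and hence do not affect the Cauchy property, which you implicitly use but could state explicitly.
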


Especially, setting $u =\sum\nolimits_{j\geq 0} \bbchi(c_j)u_j$, we
have that $u\sim \sum\nolimits_{j\geq 0} u_j$ in $\bE^0$. Indeed, for
$J\in\N_0$,
\[
  u -\sum\nolimits_{j<J}u_j =
  -\,\sum\nolimits_{j<J}\left(1-\bbchi(c_j)\right) u_j +
  \sum\nolimits_{j\geq J} \bbchi(c_j)u_j \in \bE^J.
\]


\subsection{Conormal distributions}\label{conormal_distributions}

We remind the reader of the concept of a conormal distribution and
list some of the basic properties of those distributions. For details,
see e.g.~\cite[Sec.~18.2]{Hoe1985a}.

Let $X$ be a $\cC^\infty$ manifold, of dimension $n$, and $Y\subset X$
be a closed $\cC^\infty$ submanifold, of codimension $k\leq n$. A
distribution $u\in\cD'(X)$ is said to be conormal with respect to $Y$,
of order $\nu\in\R$, if $T_1\dotsc T_l u\in
B_{2,\infty}^{-\nu-n/4}(X)$ for any number $l$ of vector fields $T_j$
on $X$ tangent to $Y$ (where $B_{p,q}^s(X)$ is to designate the local
Besov spaces on $X$). This class of conormal distributions is denoted
by $I^\nu(X,Y)$. We have that $\operatorname{WF}(u)\subseteq \dot N^*
Y$ for $u\in I^\nu(X,Y)$, where $\dot N^* Y$ is the conormal bundle of
$Y$ in $X$ with the zero section removed. In particular,
$u\bigr|_{X\setminus Y} \in \cC^\infty(X\setminus Y)$.

There is an alternative description of conormal distributions through
oscillatory integrals. Specifically, in local coordinates $x=(x',x'')
\in \R^k \times \R^{n-k}$ such that $Y=\{x'=0\}$,
\[
  u(x) = \int_{\R^k} \ee^{\ii x'\xi'} a(x'',\xi') \,\dbar \xi'
  \mod{\cC^\infty(\R^n)},
\]
where $a\in S^{\nu+(n-2k)/4}(\R^{n-k}\times\R^k)$ and $\dbar\xi' =
(2\pi)^{-(n+2k)/4}\,\dd\xi'$. The subclass $I_\cl^\mu(X,Y)\subset
I^{\Re\mu}(X,Y)$ for $\mu\in\C$ of classical conormal distributions
results upon requiring that $a\in
S_\cl^{\mu+(n-2k)/4}(\R^{n-k}\times\R^k)$. In this case, $a(x'',\xi')
= \chi(\xi') a_{(\bar\mu-k)}(x'',\xi') + a_r(x'',\xi')$, where
$a_{(\bar\mu-k)}\in S^{(\bar\mu-k)}(\R^{n-k}\times\dot\R^k)$, $\bar\mu
= \mu +(n+2k)/4$, $\chi\in \cC^\infty(\R^k)$, $\chi(\xi')=0$ for
$|\xi'|\leq 1$, $\chi(\xi')=1$ for $|\xi'|\geq2$, and $a_r\in
S_\cl^{\bar\mu-k-1}(\R^{n-k}\times\R^j)$. Then
\[
  \sigma_\psi^\mu(u) = a_{(\bar\mu-k)}(x'',\xi')\left|\dd \xi'\right|
\]
is the local coordinate expression for the principal symbol
$\sigma_\psi^\mu(u) \in S^{(\bar\mu)} (\dot N^* Y;\Omega_{\dot N^*
  Y}^{1,\textup{v}})$ with $\Omega_{\dot N^* Y}^{1,\textup{v}} =
\Omega_{\dot N^* Y}^{1/2} \otimes \bigl(\pi\bigr|_{\dot N^* Y}\bigr)^*
\Omega_X^{-1/2}$ being the vertical $1$-density bundle over $\dot N^*
Y$. Here, $\Omega_Z^\alpha$ is the $\alpha$-density bundle over a
manifold $Z$ and $\pi\colon \dot T^* X\to X$ denotes the canonical
projection.

\medskip

In Section~\ref{prop_conormal}, we make use of the following
properties:

\smallskip
\noindent
(a) \ \textbf{(Principal symbol map)} \ The principal symbol map fits
into a short exact sequence
\[
  0 \longrightarrow I_\cl^{\mu-1}(X,Y) \longrightarrow I_\cl^\mu(X,Y)
  \overset{\sigma_\psi^\mu}{\longrightarrow} S^{(\bar\mu)} (\dot N^*
  Y;\Omega_{\dot N^* Y}^{1,\textup{v}}) \longrightarrow 0,
\]
which splits. As a consequence, $I_\cl^\mu(X,Y)$ is a nuclear
Fr\'echet space equipped with the projective limit topology of
$\prod_{j<J}S^{(\bar\mu-j)}(\dot N^* Y;\Omega_{\dot N^*
  Y}^{1,\textup{v}}) \times I^{\Re \mu-J}(X,Y)$ as $J\to\infty$, with
each factor carrying its natural Fr\'echet topology.

\smallskip
\noindent
(b) \ \textbf{(Application of pseudodifferential operators)} \ For
$P\in \Psi_\cl^m(X)$ and $u\in I_\cl^\mu(X,Y)$, we have $Pu \in
I_\cl^{m+\mu}(X,Y)$ and $\sigma_\psi^{m+\mu}(Pu) =
\sigma_\psi^m(P)\bigr|_{\dot N^* Y} \, \sigma_\psi^\mu(u)$. If $Y$ is
characteristic for $P$ (i.e., $\sigma_\psi^m(P)\bigr|_{\dot N^*
  Y}=0$), then $Pu \in I_\cl^{m+\mu-1}(X,Y)$ and
$\sigma_\psi^{m+\mu-1}(Pu) = T_Y^P \sigma_\psi^mu(u)$, where $T_Y^P
\in \operatorname{Diff}^1(\dot N^* Y;\Omega_{\dot N^*
  Y}^{1,\textup{v}})$ is a first-order differential operator acting on
sections of the bundle $\Omega_{\dot N^* Y}^{1,\textup{v}}$ (similar
to the transport operators $T_C^P$ introduced at the end of
Section~\ref{Sec1}).

\smallskip
\noindent
(c) \ \textbf{(Restriction to submanifolds)} \ Let $u\in
I_\cl^\mu(X,Y)$ and $Z\subset X$ be a closed $\cC^\infty$ submanifold
that meets $Y$ transversally. Then $u\bigr|_Z \in
I_\cl^{\mu+l/4}(Z,Y\cap Z)$, where $l$ is the codimension of $Z$ in
$X$, and $\sigma_\psi^{\mu+l/4}\bigl(u\bigr|_Z\bigr) =(2\pi)^{-l/4}
\sigma_\psi^\mu(u)\bigr|_{\dot N_Z^*(Y\cap Z)}$.

The restriction to $\dot N_Z^*(Y\cap Z)$ is to be understand as
follows: \ $T_pY+ T_pZ =T_p X$ for $p\in Y\cap Z$ implies that
$T_pX/T_pY \cong T_pZ/T_p(Y\cap Z)$ and, therefore, $N_X
Y\bigr|_{Y\cap Z}\cong N_Z(Y\cap Z)$
canonically for the normal bundles. Passing to the dual bundles,
$N_Z^*(Y\cap Z) \cong N_X^* Y\bigr|_{Y\cap Z} \hookrightarrow N_X^*Y$.

\smallskip
\noindent
(d) \ \textbf{(Asymptotic completeness)} \ Let $u_j \in
I_\cl^{\mu-j}(X,Y)$ for $j\in\N_0$. Then there exists a $u\in
I_\cl^\mu(X,Y)$ with the property that $u-\sum_{j<J} u_j\in
I_\cl^{\mu-J}(X,Y)$ for all $J\in\N_0$. This $u$ is uniquely
determined modulo $\cC^\infty(X)$.


\subsection{The transmission property}\label{trprop}

Let $X$ be a $\cC^\infty$ manifold and $Y\subset X$ be a closed
$\cC^\infty$ hypersurface. We discuss here the two-sided transmission
property for distributions $u\in \cD'(X)$ with respect to $Y$. For a
more detailed discussion, see \cite{Bou1979, HP1979, Hoe1985a}.

\begin{defn}
$u\in I_\cl^\mu(X,Y)$ is said to have the two-sided transmission
  property with respect to $Y$ if $u\bigr|_{X\setminus Y}\in
  \cC^\infty(X\setminus Y)$ extends smoothly (locally from both sides)
  to $Y$.
\end{defn}

A necessary condition is that $\bar \mu = \mu + \frac{n}4-\frac12 \in
\Z$, where $n= \dim X$.

The two-sided transmission property is a local property. In local
coordinates $x= (x',x_n)\in\R^{n-1}\times\R$, let $Y=\{x_n=0\}$. Then
$u\in I_\cl^\mu(X,Y)$ if and only if
\begin{equation}\label{tr1}
   u(x) = \int_{-\infty}^\infty \ee^{\ii x_n\xi} a(x',\xi)\,\dbar \xi
   \bmod{\cC_{\textup{b}}^\infty},
\end{equation}
where $a\in S_\cl^{\bar \mu} (\R^{n-1}\times\R)$. Let
\begin{equation}\label{tr2}
  a(x',\xi) \sim \sum_{k\geq0} \chi(\xi) a_{(\bar\mu-k)}(x',\xi),
\end{equation}
where $ a_{(\bar\mu-k)} \in S^{(\bar\mu-k)}(\R^{n-1}\times\dot\R)$.

\begin{prop}
For $\bar\mu\in\Z$, the distribution $u\in I_\cl^\mu(X,Y)$ given in
\eqref{tr1} with \eqref{tr2} possesses the two-sided transmission
property if and only if
\[
  a_{(\bar\mu-k)}(x',-\xi) = (-1)^{\bar\mu-k} a_{(\bar\mu-k)}(x',\xi),
  \quad k\in\N_0.
\]
\end{prop}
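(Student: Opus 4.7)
Since the transmission property is local and \eqref{tr1}--\eqref{tr2} is already in local coordinates with $Y=\{x_n=0\}$, the question is purely one-dimensional in $x_n$, with $x'$ entering only as a parameter. My first step is to invoke asymptotic completeness of $S_\cl^{\bar\mu}(\R^{n-1}\times\R)$ to reduce to one homogeneous term: the remainder $a(x',\xi)-\sum_{k<K}\chi(\xi) a_{(\bar\mu-k)}(x',\xi)$ lies in $S^{\bar\mu-K}$ and contributes distributions of arbitrarily high smoothness in $x_n$ as $K\to\infty$. Consequently it suffices to verify, separately for each $k\in\N_0$, that
\[
  v_k(x)=\int_\R \ee^{\ii x_n\xi}\,\chi(\xi)\,a_{(\bar\mu-k)}(x',\xi)\,\dbar\xi
\]
has the two-sided transmission property at $\{x_n=0\}$ iff the parity relation holds for $a_{(\bar\mu-k)}$.

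Writing $m=\bar\mu-k\in\Z$ and splitting every homogeneous symbol of integer degree $m$ into its parity-correct and parity-wrong components $a_{(\bar\mu-k)}=a_{\mathrm{par}}+a_{\mathrm{anti}}$ (with $a_{\mathrm{par}}(x',-\xi)=(-1)^m a_{\mathrm{par}}(x',\xi)$ and $a_{\mathrm{anti}}(x',-\xi)=-(-1)^m a_{\mathrm{anti}}(x',\xi)$), I would handle the two directions by one-variable Fourier-transform computations. Homogeneity of integer degree combined with correct parity forces $a_{\mathrm{par}}(x',\xi)=\alpha(x')\,\xi^m$ on $\R\setminus\{0\}$, the natural monomial extension across $\xi=0$. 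For $m\geq0$ this is a polynomial in $\xi$, hence $\chi(\xi)\xi^m-\xi^m$ has compactly supported Fourier transform (Schwartz in $x_n$) while $\mathcal F^{-1}[\xi^m]$ is a constant multiple of $\delta^{(m)}(x_n)$ --- supported at $\{x_n=0\}$, hence trivially transmission-compatible. For $m<0$, I would use the identity $\xi^m=\xi_+^m+(-1)^m\xi_-^m$ on $\R\setminus\{0\}$ together with the standard formulas for the Fourier transforms of the regularized distributions $\xi_\pm^m$ (cf.~\cite[Sec.~3.2]{Hoe1985a}); the result is, up to Schwartz terms, homogeneous of degree $-m-1\geq0$ in $x_n$ and smooth on each of $\{x_n>0\}$, $\{x_n<0\}$ with one-sided smooth boundary values at $x_n=0$. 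For $a_{\mathrm{anti}}(x',\xi)=\alpha(x')\,\xi^m\operatorname{sgn}\xi$ the same computation yields a Fourier transform proportional to $\operatorname{sgn}(x_n)|x_n|^{-m-1}$ (or $\log|x_n|$ in the borderline integer cases), which fails to admit smooth one-sided extensions at $x_n=0$.

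The forward direction (parity $\Rightarrow$ transmission) then follows by summing the individual transmission-compatible contributions. For the converse I would argue inductively on $k$: the leading parity-wrong component $a_{\mathrm{anti},(\bar\mu)}$ produces a singular contribution at $x_n=0$ of homogeneity $-\bar\mu-1$ that, by strict order considerations, cannot be cancelled by contributions from lower-order terms; hence if $u$ has the two-sided transmission property, this forces $a_{\mathrm{anti},(\bar\mu)}=0$, which is the parity relation at level $k=0$. Subtracting the (now parity-correct) leading piece, the remainder belongs to $I_\cl^{\mu-1}(X,Y)$ and still has the transmission property, so the induction proceeds. The main technical obstacle is the one-variable computation for negative integer $m$: one has to track the logarithmic corrections that arise in the analytic continuation of $\xi_\pm^\alpha$ at the integer poles and verify that the parity relation is exactly what is needed to cancel these and leave one-sided smooth boundary values at $x_n=0$.
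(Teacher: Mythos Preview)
The paper does not actually supply a proof of this proposition: it is stated in Appendix~A.3 as a known fact, with the reader referred to \cite{Bou1979,HP1979,Hoe1985a} for details. So there is no ``paper's own proof'' to compare against beyond those references.

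Your approach is the standard one and is essentially what one finds in those references (in particular in H\"ormander, \cite[Sec.~18.2]{Hoe1985a}). The reduction to one variable with $x'$ as a parameter, the parity splitting of each homogeneous component, and the explicit identification of the one-dimensional inverse Fourier transforms of $\chi(\xi)\xi^m$ versus $\chi(\xi)\xi^m\operatorname{sgn}\xi$ is exactly the right computation: the parity-correct piece produces either $\delta^{(m)}(x_n)$ (for $m\geq0$) or a function that is piecewise polynomial in $x_n$ (for $m<0$, via the Fourier transforms of $(\xi\pm\ii 0)^{m}$), both of which are smooth from each side, while the parity-wrong piece produces contributions involving $|x_n|^{-m-1}$ or $\log|x_n|$ that are not. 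Your inductive extraction argument for the converse is also the standard one.

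Two small points worth tightening. First, in the forward direction you cannot literally reduce to a finite sum: you need that the remainder in $S^{\bar\mu-K}$ produces a function in $C^{K-\bar\mu-2}$ across $x_n=0$, so that smoothness from both sides up to any prescribed finite order follows by taking $K$ large, and then let $K\to\infty$; you allude to this but it should be said explicitly. Second, your description of the parity-correct case for $m<0$ as ``homogeneous of degree $-m-1\geq0$ in $x_n$ and smooth on each side'' glosses over the fact that the actual distribution is $\chi(\xi)\xi^m$, not $\xi^m$; the difference $(1-\chi(\xi))\xi^m$ is not a Schwartz function when $m<0$ (it is singular at $\xi=0$), so the correction is not simply ``Schwartz in $x_n$''. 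The clean way is to work instead with $(\xi\pm\ii 0)^m$, whose inverse Fourier transforms are supported in $\pm x_n\geq0$ and are genuinely homogeneous; the parity condition is precisely what makes $\xi^m$ on $\dot\R$ a linear combination of these two boundary values, and the cutoff then only modifies things by a term whose contribution is $\cC^\infty$ in $x_n$ away from the support issues. This is the technical point you flag at the end, and it is indeed where the work lies.
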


\begin{rem}
Let $\bar\mu\in\Z$ and $u\in I_\cl^\mu(X,Y)$ have the two-sided
transmission property. Then, locally,
\[
   u = u\bigr|_{\{x_n\neq0\}} + \sum_{0\leq j\leq \bar\mu} c_j(x')
   \otimes \delta^{(j)}(x_n),
\]
where $c_j\in \cC_{\textup{b}}^\infty(\R^{n-1})$. In particular, $u$
is a regular distribution if $\bar\mu<0$.
\end{rem}

\begin{rem}
For general $\mu\in \C$, one similarly defines a one-sided
transmission property (locally with respect to either side of
$Y$). All three conditions are equivalent if $\bar\mu\in\Z$.
\end{rem}


\nocite{*}
\bibliographystyle{abbrv}
\bibliography{abstract_scheme.bib}


\end{document}